\documentclass{amsart}

\usepackage{amsmath}
\usepackage{amsthm}
\usepackage{amssymb}
\usepackage{txfonts}

\theoremstyle{plain}
\newtheorem{definition}{Definition}
\newtheorem{thm}[definition]{Theorem}

\newtheorem{lem}[definition]{Lemma}
\newtheorem{cor}[definition]{Corollary}

\newtheorem{remark}[definition]{Remark}

\def\F#1#2#3#4{F\biggl(\genfrac..{0pt}{}{{#1},\,{#2}}{#3}\,;#4\biggr)}
\def\p#1#2#3#4{\phi\biggl(\genfrac..{0pt}{}{{#1},\,{#2}}{#3}\,;q,\,#4\biggr)}
\def\P#1#2#3#4#5#6#7#8{{}_{4}\phi_{3}\biggl(\genfrac..{0pt}{}{{#1},\,{#2},\,{#3},\,{#4}}{{#5},\,{#6},\,{#7}}\,;q,\,#8\biggr)}
\def\f#1#2#3#4{f\biggl(\genfrac..{0pt}{}{{#1},\,{#2}}{#3}\,;#4\biggr)}
\def\g#1#2#3#4{g\biggl(\genfrac..{0pt}{}{{#1},\,{#2}}{#3}\,;q,\,#4\biggr)}
\def\y#1#2#3#4#5{y_{#1}\biggl(\genfrac..{0pt}{}{{#2},\,{#3}}{#4}\,;#5\biggr)}
\def\c#1#2#3#4#5#6#7{\biggl(\genfrac..{0pt}{}{{#1},\,{#2}}{#3}\,;\genfrac..{0pt}{}{{#4},\,{#5}}{#6}\,;q,\,#7\biggr)}
\def\Y#1#2#3#4#5#6#7{Y\biggl(\genfrac..{0pt}{}{{#1},\,{#2}}{#3}\,;\genfrac..{0pt}{}{{#4},\,{#5}}{#6}\,;#7\biggr)}
\def\Z#1#2#3#4#5#6#7{\tilde{Y}\biggl(\genfrac..{0pt}{}{{#1},\,{#2}}{#3}\,;\genfrac..{0pt}{}{{#4},\,{#5}}{#6}\,;#7\biggr)}

\def\G{\Gamma}
\def\vec#1{\mbox{\boldmath $#1$}}

\makeatletter
\@addtoreset{equation}{section}

\makeatother

\begin{document}
\title[{\bf Three Term Relations for Basic Hypergeometric Series}]{{\bf Three Term Relations for Basic Hypergeometric Series}}
\author[Yuka Suzuki]{Yuka Suzuki}
\date{}
\subjclass[2010]{33D15.}
\keywords{Basic hypergeometric series; $q\mbox{-}$hypergeometric series; Three term relation; Contiguous relation.}

\maketitle

\begin{abstract}
Any three basic hypergeometric series ${}_{2} \phi_{1}$ whose respective parameters $(a, b, c)$ 
differ by integer powers of the base $q$ satisfy a linear relation 
with coefficients which are rational functions of $a,\ b,\ c,\ q$ and the variable $x$. 
These relations are called three term relations for the basic hypergeometric series ${}_{2} \phi_{1}$. 

This paper gives explicit expressions for the coefficients of these three term relations. 
\end{abstract}

\section{\bf{Introduction}}
In this paper, we give explicit expressions for the coefficients of 
three term relations for the basic hypergeometric series ${}_{2} \phi_{1}$, 
which are $q$-analogues of Ebisu's \cite{Eb1} results for Gauss hypergeometric series.

The basic hypergeometric series ${}_{p + 1} \phi_{p}$ is defined by 
\begin{align*}
{}_{p + 1}\phi_{p}\biggl(\genfrac..{0pt}{}{a_{0},\,a_{1}, \dotsc, a_{p}}{b_{1}, \dotsc, b_{p}}\,; q, x\biggr) 
&= {}_{p + 1}\phi_{p} (a_{0}, a_{1}, \ldots, a_{p}\,; b_{1}, \ldots, b_{p}\,; q, x) \\
&:= \sum_{n = 0}^{\infty} \frac{(a_{0}; q)_{n} (a_{1}; q)_{n} \dotsm (a_{p}; q)_{n}}{(q; q)_{n} (b_{1}; q)_{n} \dotsm (b_{p}; q)_{n}} x^{n}, \nonumber 
\end{align*}
where $q$ is a fixed complex number satisfying $\lvert q \rvert < 1$, 
and $(a; q)_{n}$ denotes the $q\mbox{-}$shifted factorial defined by 
$(a; q)_{n} := (a; q)_{\infty} / (a q^{n}; q)_{\infty}$, 
$(a; q)_{\infty} := \prod_{n = 0}^{\infty} (1 - a q^{n})$. 
It is assumed that $b_{1}, \dotsc, b_{p}$ are not $1$ or negative integer powers of $q$. 
For convenience of notation, we denote $(a; q)_{n},\ (a; q)_{\infty}$ and ${}_{2}\phi_{1}$ 
by $(a)_{n},\ (a)_{\infty}$ and $\phi$, respectively. 
Also, we set $a = q^{\alpha},\ b = q^{\beta}$ and $c = q^{\gamma}$.

The series $\phi (a, b; c; q, x)$ is the $q\mbox{-}$analogue of Gauss hypergeometric series 
$F (\alpha, \beta; \gamma; x)$. 
It is known that for any triples of integers $(k, l, m)$ and $(k', l', m')$, 
three hypergeometric series 
\begin{align*}
\F{\alpha + k}{\beta + l}{\gamma + m}{x}, \quad \F{\alpha + k'}{\beta + l'}{\gamma + m'}{x}, 
\quad \F{\alpha}{\beta}{\gamma}{x}
\end{align*}
satisfy a linear relation with coefficients which are rational functions of $\alpha,\ \beta,\ \gamma$ and $x$. 
This relation is called the three term relation for $F$. 
Gauss obtained three term relations for $F$ for the cases of 
\begin{align*}
(k, l, m), (k', l', m') \in \left\{(1, 0, 0), (-1, 0, 0), (0, 1, 0), (0, -1, 0), (0, 0, 1), (0, 0, -1) \right\}, 
\end{align*}
where $(k, l, m) \neq (k', l', m')$. 
That is, there are $\binom{6}{2} = 15$ pairs of $(k, l, m)$ and $(k', l', m')$. 
See \cite[Chapter~$4$, p.$71$]{Rain} for fifteen relations due to Gauss. 
Ebisu \cite{Eb1} considered three term relations for $F$ for the cases of 
$(k, l, m) \in \mathbb{Z}^{3}$ and $(k', l', m') = (1, 1, 1)$ : 
\begin{align}\label{3tr_F}
\F{\alpha + k}{\beta + l}{\gamma + m}{x} 
= P_{1} \cdot \F{\alpha + 1}{\beta + 1}{\gamma + 1}{x} + P_{2} \cdot \F{\alpha}{\beta}{\gamma}{x}, 
\end{align}
and showed that the coefficients $P_{1}$ and $P_{2}$ can be expressed as 
sums of products of two hypergeometric series. 

Similarly, for any triples of integers $(k, l, m)$ and $(k', l', m')$, 
three basic hypergeometric series 
\begin{align*}
\p{a q^{k}}{b q^{l}}{c q^{m}}{x}, \quad 
\p{a q^{k'}}{b q^{l'}}{c q^{m'}}{x}, \quad \p{a}{b}{c}{x}
\end{align*}
satisfy a linear relation with coefficients which are rational functions of $a,\ b,\ c,\ q$ and $x$. 
We call this relation the three term relation for $\phi$. 
Fifteen three term relations for $\phi$ corresponding to 
Gauss's three term relations for $F$ were obtained by Kim, Rathie and Choi \cite{KRC}.

In this paper, we consider the following three term relation for $\phi$ corresponding to $(\ref{3tr_F})$. 
\begin{align}\label{3tr}
\p{a q^{k}}{b q^{l}}{c q^{m}}{x} 
= Q \cdot \p{a q}{b q}{c q}{x} + R \cdot \p{a}{b}{c}{x}. 
\end{align}
We show that the pair $(Q, R)$ of functions, rational in $a,\ b,\ c,\ q$ and $x$, 
are uniquely determined by $(k, l, m)$, 
and give explicit expressions for $Q$ and $R$. 
It should be noted that if we use these expressions, then we can obtain more general three term relations 
\begin{align*}
\p{a q^{k}}{b q^{l}}{c q^{m}}{x} 
= Q' \cdot \p{a q^{k'}}{b q^{l'}}{c q^{m'}}{x} + R' \cdot \p{a}{b}{c}{x}
\end{align*}
by eliminating $\phi (a q, b q; c q; x)$ from three term relations $(\ref{3tr})$ 
for $(k, l, m)$ and $(k', l', m')$. 

Throughout this paper, unless otherwise stated explicitly, we assume that 
\begin{align*}
{\rm E1} : c,\, \frac{a}{b} \notin q^{\mathbb{Z}}, \quad 
{\rm E2} : a,\, b,\, \frac{c}{a},\, \frac{c}{b} \notin q^{\mathbb{Z}}. 
\end{align*}

\subsection{The coefficients of three term relations for $\phi$}\mbox{} \\

Our main results are the following three theorems. 
Remark that we only need to consider $(\ref{3tr})$ for the cases of $k \leq l$ 
since $\phi (a, b; c; q, x)$ is symmetric in $a$ and $b$. 

The following theorem says about the uniqueness of the pair $(Q, R)$ satisfying $(\ref{3tr})$. 
\begin{thm}\label{uniqueness}
For any triple of integers $(k, l, m)$, the pair $(Q, R)$ of rational functions of $a,\ b,\ c,\ q$ and $x$, 
satisfying the three term relation $(\ref{3tr})$, are uniquely determined by $(k, l, m)$. 
\end{thm}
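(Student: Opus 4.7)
The plan is to reduce the uniqueness of $(Q, R)$ to the linear independence of $\p{aq}{bq}{cq}{x}$ and $\p{a}{b}{c}{x}$ over the field $\mathbb{C}(a, b, c, q, x)$, and then to verify that independence via a coefficient-by-coefficient comparison in the $x$-expansion. If $(Q_{1}, R_{1})$ and $(Q_{2}, R_{2})$ are two pairs of rational functions both satisfying $(\ref{3tr})$, their difference $(A, B) := (Q_{1} - Q_{2}, R_{1} - R_{2})$ satisfies
\begin{align*}
A \cdot \p{aq}{bq}{cq}{x} + B \cdot \p{a}{b}{c}{x} = 0,
\end{align*}
so it is enough to show that any such identity forces $A \equiv B \equiv 0$.

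Clearing a common denominator in $x$, I would assume $A = \sum_{i} A_{i} x^{i}$ and $B = \sum_{j} B_{j} x^{j}$ with $A_{i}, B_{j} \in \mathbb{C}(a, b, c, q)$. Expanding the two series as $\sum_{n} u_{n} x^{n}$ and $\sum_{n} v_{n} x^{n}$, the key algebraic identity is
\begin{align*}
u_{n} = \frac{(1 - aq^{n})(1 - bq^{n})(1 - c)}{(1 - a)(1 - b)(1 - cq^{n})} \, v_{n}.
\end{align*}
The coefficient of $x^{N}$ for $N$ sufficiently large reads $\sum_{i} A_{i} u_{N-i} + \sum_{j} B_{j} v_{N-j} = 0$. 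Dividing by $v_{N}$ and setting $t := q^{N}$ produces a single rational function identity in $t$ which, since it holds on the infinite set $\{q^{N}\}_{N \geq N_{0}}$, must hold identically in $t$.

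The final step extracts $A_{i} = B_{j} = 0$ by analyzing poles of this rational identity. A short simplification shows that the term with $i = 0$ is the unique one carrying a pole at $t = 1/c$; hypothesis E2 (which ensures $c/a, c/b \notin q^{\mathbb{Z}}$) prevents any cancellation, so the residue at $t = 1/c$ forces $A_{0} = 0$. After this, the remaining identity is a $\mathbb{C}(a, b, c, q)$-linear combination of rational functions whose poles lie at $t = q^{m}/a$ and $t = q^{m}/b$; successive residues at these locations --- distinct and uncancelled under E1 and E2 --- force all remaining coefficients to vanish in turn. The main obstacle is this pole-tracking, which is in principle elementary but must be executed with care; the hypotheses E1 and E2 are stated precisely to exclude the degenerate coincidences (coinciding poles, accidental zero-pole cancellations) under which the analysis could fail.
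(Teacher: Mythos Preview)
Your approach is correct and genuinely different from the paper's. The paper argues by specialization: assuming two distinct pairs exist, the quotient $\phi(aq,bq;cq;q,x)/\phi(a,b;c;q,x)$ would be rational in $a,b,c,q,x$; using the auxiliary three-term relation~$(\ref{(1,0,1)})$ this forces $g(a,b;c;q,x):=\phi(aq,b;cq;q,x)/\phi(a,b;c;q,x)$ to be rational as well; then the Bailey--Daum summation~$(\ref{summation formula2})$ evaluates $g$ at $c=aq/b$, $x=-q/b$ as an explicit ratio of infinite products with an unbounded pole set in $a$, a contradiction. Your route avoids both the auxiliary three-term relation and the summation formula, working instead with the series coefficients themselves: the identity $u_n/v_n=(1-aq^n)(1-bq^n)(1-c)/\bigl((1-a)(1-b)(1-cq^n)\bigr)$ lets you turn the vanishing of the $x^N$-coefficient into a rational identity in $t=q^N$, and the pole structure of the functions $u_{N-i}/v_N$ and $v_{N-j}/v_N$ isolates the coefficients. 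This is more elementary and self-contained (no special evaluations), at the cost of some bookkeeping that the paper's argument sidesteps.

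A couple of points worth sharpening in your write-up. After you force $A_0=0$ via the residue at $t=1/c$, the highest remaining pole at $t=q^{M}/a$ (with $M=\max(\deg_x A-1,\deg_x B)$) will in general pick up contributions from \emph{both} $A_{M+1}$ and $B_M$; you then need the companion residue at $t=q^{M}/b$ and the observation that the resulting $2\times 2$ system is nonsingular precisely when $a\neq b$, which is where E1 enters. Also, the base of the induction (the terms $A_1 f_1$ and $B_0 g_0$, where $f_1(t)=(1-c)(1-t)/\bigl((1-a)(1-b)\bigr)$ and $g_0\equiv 1$ have no poles) is settled not by a residue but by matching the $t^1$- and $t^0$-coefficients of the resulting polynomial identity. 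These are the details you allude to under ``must be executed with care''; once they are spelled out the argument is complete.
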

The following two theorems give explicit expressions for $Q$ and $R$. 
Selectively using these theorems will be helpful when we calculate $Q$ and $R$ (see Remark~$\ref{rem}$). 
\begin{thm}\label{main}
For any integers $k,\ l$ and $m$ with $k \leq l$, the coefficients of $(\ref{3tr})$ can be expressed as 
\begin{align*}
Q &= Q (k, l, m) 
= - \frac{(1 - a) (1 - b) c}{(q - c) (1 - c)} 
\frac{x^{1 - \max \left\{m, 0 \right\}}}{(a b q x / c)_{\max \left\{k + l - m, 0 \right\} - 1}} 
P \c{k}{l}{m}{a}{b}{c}{x}, \allowdisplaybreaks \\
R &= R (k, l, m)
= - \frac{x^{- \max \left\{m - 1, 0 \right\}}}{(a b q x / c)_{\max \left\{k + l - m - 1, 0 \right\}}} 
P \c{k - 1}{l - 1}{m - 1}{a q}{b q}{c q}{x}, 
\end{align*}
where $P$ is a polynomial in $x$ defined by 
\begin{align*}
P = P \c{k}{l}{m}{a}{b}{c}{x} := 
\begin{cases}
\displaystyle\sum_{n = 0}^{l - 1} \left(A_{n} - B_{n - m} \right) x^{n} 
& (m \geq 0,\, k + l - m \geq 0), \\
\displaystyle\sum_{n = 0}^{m - k - 1} \left(\tilde{A}_{n} - \tilde{B}_{n - m} \right) x^{n} 
& (m \geq 0,\, k + l - m < 0), \\
\displaystyle\sum_{n = 0}^{l - m - 1} \left(A_{n + m} - B_{n} \right) x^{n} 
& (m < 0,\, k + l - m \geq 0), \\
\displaystyle\sum_{n = 0}^{- k - 1} \left(\tilde{A}_{n + m} - \tilde{B}_{n} \right) x^{n} 
& (m < 0,\, k + l - m < 0). 
\end{cases}
\end{align*}
Here, $A_{n},\ B_{n},\ \tilde{A}_{n}$ and $\tilde{B}_{n}$ are rational functions of $a,\ b,\ c$ and $q$ 
(see Definition~$\ref{def:P,A,B}$ for details). 
\end{thm}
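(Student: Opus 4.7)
The plan is to combine the uniqueness result Theorem~\ref{uniqueness} with an induction on the parameter shifts. By uniqueness, it suffices to produce, for each triple $(k,l,m)$ with $k \le l$, some pair $(Q,R) \in \mathbb{Q}(a,b,c,q,x)^{2}$ satisfying $(\ref{3tr})$, and then to check that the explicit expressions of the theorem coincide with that pair.

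For existence I would use the elementary contiguous relations for $\phi$ that shift one of the parameters $a$, $b$, or $c$ by a single factor of $q$. These are $q$-analogues of Gauss's contiguous relations, derivable directly from the series definition. Starting from the trivial base cases $(k,l,m) = (1,1,1)$ with $(Q,R) = (1,0)$ and $(k,l,m) = (0,0,0)$ with $(Q,R) = (0,1)$, iterated application of these shifts reaches every triple $(k,l,m)$ with $k \le l$ and produces a rational pair $(Q(k,l,m), R(k,l,m))$.

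The identification step is to verify that the candidate expressions from the statement satisfy the same recursion. After substituting the piecewise definition of $P$ from Definition~\ref{def:P,A,B} and clearing the $(abqx/c)_{N}$ denominators, each recursion reduces to a polynomial identity in $x$ whose coefficients are rational in $a,b,c,q$. These can be checked term-by-term from the explicit forms of $A_{n}$, $B_{n}$, $\tilde{A}_{n}$, $\tilde{B}_{n}$ using standard $q$-Pochhammer manipulations, and combined with the base cases they pin the candidates down uniquely.

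The principal obstacle is the piecewise structure of $P$: the four branches correspond to the sign combinations of $m$ and $k+l-m$, and a single elementary shift can push the triple across a boundary. At each such transition — for example when $m$ passes from $0$ to $-1$, or when $k+l-m$ does so — I would need to verify that the ostensibly different expressions for $P$ agree, typically via telescoping or the vanishing of a particular $q$-shifted factorial, and simultaneously that the exterior prefactors $x^{-\max\{m,0\}}$, $x^{-\max\{m-1,0\}}$, $(abqx/c)_{\max\{k+l-m,0\}-1}$, and $(abqx/c)_{\max\{k+l-m-1,0\}}$ shift compatibly across the boundary. A secondary technical point, closely related, is to confirm that each branch of $P$ is genuinely a polynomial in $x$ of the asserted degree, which is exactly what guarantees that $Q$ and $R$ are rational functions rather than merely formal or meromorphic objects.
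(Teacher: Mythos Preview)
Your strategy differs substantially from the paper's, which does not use induction at all. Instead the paper realises $\tilde Q$ and $\tilde R$ directly as ratios of Wronskian-type expressions $Y$ built from the two linearly independent solutions $y_1, y_2$ of the $q$-hypergeometric equation at $x=0$ (equations (\ref{tQ,Y})--(\ref{tR,Y})). The polynomial $P$ then emerges in one stroke: Heine's transformation $(\vec{\Phi 1})$ rewrites $Y$ as a prefactor times an infinite series whose $n$th coefficient is $A_n - B_{n-m}$ (or $\tilde A_n - \tilde B_{n-m}$), and Gasper's identities $(\vec{\Phi 2})$--$(\vec{\Phi 4})$ show that these coefficients vanish beyond the asserted degree (Lemma~\ref{(i)-(iv)}). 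The four piecewise cases arise simply from choosing which of the two Heine-transformed expansions to use and the sign of $m$; no boundary matching is needed because the formula is derived rather than verified.

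The gap in your plan is that the identification step is not routine. Each $A_n$, $B_n$, $\tilde A_n$, $\tilde B_n$ in Definition~\ref{def:P,A,B} is itself a terminating ${}_4\phi_3$, so a single contiguous shift in $(k,l,m)$ turns your proposed recursion into an identity between linear combinations of ${}_4\phi_3$'s with shifted parameters. Establishing such identities is not ``standard $q$-Pochhammer manipulation''; it is exactly the sort of statement that requires transformation and summation formulas of the type the paper invokes directly, and you would in effect be re-deriving fragments of $(\vec{\Phi 2})$--$(\vec{\Phi 4})$ case by case, compounded by the boundary transitions you already flag. You have correctly identified the piecewise matching as an obstacle, but the interior recursion on the ${}_4\phi_3$ coefficients is equally unsubstantiated and probably harder.
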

\begin{thm}\label{main2}
For any integers $k,\ l$ and $m$ with $k \leq l$, the coefficients of $(\ref{3tr})$ can be expressed as 
\begin{align*}
Q &= Q (k, l, m) 
= - \lambda \frac{(1 - a) (1 - b) c}{(q - c) (1 - c)} 
\frac{x^{1 - \max \left\{m, 0 \right\}}}{(a b q x / c)_{\max \left\{k + l - m, 0 \right\} - 1}} 
\tilde{P} \c{k}{l}{m}{a}{b}{c}{x}, \allowdisplaybreaks \\
R &= R (k, l, m)
= - \lambda' \frac{x^{- \max \left\{m - 1, 0 \right\}}}{(a b q x / c)_{\max \left\{k + l - m - 1, 0 \right\}}} 
\tilde{P} \c{k - 1}{l - 1}{m - 1}{a q}{b q}{c q}{x}, 
\end{align*}
where 
\begin{align*}
\lambda := (-1)^{k + l - m - 1 + M} 
q^{\left\{k (k - 1) + l (l - 1) - m (m - 1) + M \left(M - 1 \right) \right\} / 2}\, 
\frac{(q - c) a^{k} b^{l}}{(b - a) c^{m}} \left(\frac{a b}{c} \right)^{M} \frac{(c)_{m}}{(a)_{k} (b)_{l}} 
\end{align*}
with $M := \max\left\{k + l - m, 0 \right\}$, 
\begin{align*}
\lambda' := (-1)^{k + l - m + M'} 
q^{\left\{k (k - 1) + l (l - 1) - m (m - 1) + M' \left(M' + 1 \right) \right\} / 2}\, 
\frac{a^{k} b^{l}}{(b - a) c^{m}} \left(\frac{a b}{c} \right)^{M' - 1} \frac{(c)_{m}}{(a q)_{k - 1} (b q)_{l - 1}} 
\end{align*}
with $M' := \max\left\{k + l - m - 1, 0 \right\}$ 
and $\tilde{P}$ is a polynomial in $x$ defined by 
\begin{align*}
\tilde{P} = 
\tilde{P} \c{k}{l}{m}{a}{b}{c}{x} := 
\begin{cases}
\displaystyle\sum_{n = 0}^{l - 1} \left(C_{n} - D_{n + k - l} \right) x^{l - 1 - n} 
& (m \geq 0,\, k + l - m \geq 0), \\
\displaystyle\sum_{n = 0}^{m - k - 1} \left(\tilde{C}_{n} - \tilde{D}_{n + k - l} \right) x^{m - k - 1 - n} 
& (m \geq 0,\, k + l - m < 0), \\
\displaystyle\sum_{n = 0}^{l - m - 1} \left(C_{n} - D_{n + k - l} \right) x^{l - m - 1 - n} 
& (m < 0,\, k + l - m \geq 0), \\
\displaystyle\sum_{n = 0}^{- k - 1} \left(\tilde{C}_{n} - \tilde{D}_{n + k - l} \right) x^{- k - 1 - n} 
& (m < 0,\, k + l - m < 0). 
\end{cases}
\end{align*}
Here, $C_{n},\ D_{n},\ \tilde{C}_{n}$ and $\tilde{D}_{n}$ are rational functions of $a,\ b,\ c$ and $q$ 
(see Definition~$\ref{def:til(P),C,D}$ for details). 
\end{thm}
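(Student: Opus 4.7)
By Theorem \ref{uniqueness}, for each triple $(k, l, m)$ the pair $(Q, R)$ is uniquely determined, so it is enough to show that the expressions for $Q$ and $R$ in Theorem \ref{main2} agree with those in Theorem \ref{main}. Comparing the two presentations of $Q$ (resp.\ $R$), all of the prefactors outside the polynomial factor already match, so what remains to verify are the polynomial identities
\begin{align*}
P \c{k}{l}{m}{a}{b}{c}{x} &= \lambda \cdot \tilde{P} \c{k}{l}{m}{a}{b}{c}{x}, \\
P \c{k - 1}{l - 1}{m - 1}{a q}{b q}{c q}{x} &= \lambda' \cdot \tilde{P} \c{k - 1}{l - 1}{m - 1}{a q}{b q}{c q}{x},
\end{align*}
in each of the four regimes delimited by the signs of $m$ and $k + l - m$.

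My plan is to establish these identities by matching coefficients. Note that $P$ is written in ascending powers of $x$, whereas $\tilde{P}$ is written in descending powers; reindexing $\tilde{P}$ via $n \mapsto l - 1 - n$ in the first case, the identity becomes
\begin{align*}
A_{n} - B_{n - m} = \lambda \bigl( C_{l - 1 - n} - D_{k - 1 - n} \bigr), \qquad 0 \leq n \leq l - 1,
\end{align*}
and analogously in the remaining three cases. I expect from Definitions \ref{def:P,A,B} and \ref{def:til(P),C,D} that $A_{n}, B_{n}, C_{n}, D_{n}$ (and their tilded variants) are finite products of $q$-shifted factorials, so the principal tool should be the reversal identity
\begin{align*}
(a; q)_{n} = (-a)^{n} q^{n (n - 1) / 2} (q^{1 - n} / a; q)_{n},
\end{align*}
applied factor by factor. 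Accumulating the resulting signs, $q$-binomial powers, and reversed Pochhammer symbols should produce precisely the constant $\lambda$ of the statement; in particular the factor $(-1)^{k + l - m - 1 + M} q^{\{k(k-1) + l(l-1) - m(m-1) + M(M-1)\}/2}$ and the ratio $a^{k} b^{l} (c)_{m} / ((a)_{k} (b)_{l} c^{m})$ should emerge naturally from the cumulative reversal of the Pochhammer factors governing the indices $k$, $l$, $m$ and $M$.

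The main obstacle will be bookkeeping. The summation range, the shifts $n - m$ and $n + k - l$ in the subscripts of $B$ and $D$, and the exponent $M = \max\{k + l - m, 0\}$ that enters $\lambda$ all change between the four regimes, so one must carry out the reversal calculation case by case and verify that each produces exactly the claimed constant. A secondary subtlety is that the identity for $R$ is the identity for $Q$ under the parameter shift $(k, l, m, a, b, c) \mapsto (k - 1, l - 1, m - 1, a q, b q, c q)$; the most efficient route is to verify that $\lambda$ goes over to $\lambda'$ under this shift (in particular that $M$ becomes $M'$ and the exponents of $q$ and $ab/c$ transform correctly), rather than redoing the coefficient matching from scratch. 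Once these identities are in place, Theorem \ref{main2} follows immediately from Theorem \ref{main} via Theorem \ref{uniqueness}.
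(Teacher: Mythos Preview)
Your plan has a genuine gap stemming from a misreading of Definitions~\ref{def:P,A,B} and~\ref{def:til(P),C,D}. The coefficients $A_{n}$, $B_{n}$, $C_{n}$, $D_{n}$ (and their tilded counterparts) are \emph{not} finite products of $q$-shifted factorials: each one is a terminating ${}_{4}\phi_{3}$ series multiplied by such a product. Consequently the elementary reversal identity $(a;q)_{n} = (-a)^{n} q^{\binom{n}{2}} (q^{1-n}/a;q)_{n}$, applied ``factor by factor,'' cannot by itself transform $A_{n}$ into $\lambda\,C_{l-1-n}$; you would at minimum need a transformation formula for terminating ${}_{4}\phi_{3}$ (of Sears type), and even then it is not clear that the identities hold termwise rather than only for the differences $A_{n}-B_{n-m}$. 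Your bookkeeping program, as stated, does not go through.

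The paper avoids this entirely by proving Theorem~\ref{main2} \emph{independently} of Theorem~\ref{main}, in a fully parallel way: instead of the local solutions $y_{1}, y_{2}$ at $x=0$ used for Theorem~\ref{main}, it uses the solutions $y_{3}, y_{4}$ at $x=\infty$. Operating $H(k,l,m)$ on $y_{3}, y_{4}$ gives a second Cramer-type formula for $\tilde{Q}, \tilde{R}$ in terms of the Wronskian-like combination $\tilde{Y}$ (see $(\ref{tQ,Z})$--$(\ref{def:Z})$), and then Heine's transformation $(\vec{\Phi 1})$ together with the truncation identities $(\vec{\Phi 2})$--$(\vec{\Phi 4})$ show that $\tilde{Y}$ is a polynomial in $x^{-1}$ times an explicit infinite product, yielding $\tilde{P}$ and the constants $\lambda, \lambda'$. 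The identity $P = \lambda\,\tilde{P}$ that you set out to prove directly is then a \emph{consequence} (Corollary~\ref{saikouji}) of the two theorems combined with uniqueness, not an input to the argument.
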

From Theorems~$\ref{uniqueness}\mbox{--}\ref{main2}$, we obtain the following corollary 
which gives information about the polynomials $P$ and $\tilde{P}$. 
\begin{cor}\label{saikouji}
The polynomial $P$ in Theorem~$\ref{main}$ and the polynomial $\tilde{P}$ in Theorem~$\ref{main2}$ 
satisfy 
\begin{align}\label{P,til(P)}
P \c{k}{l}{m}{a}{b}{c}{x} = \lambda \cdot \tilde{P} \c{k}{l}{m}{a}{b}{c}{x}, 
\end{align}
where $\lambda$ is defined as in Theorem~$\ref{main2}$. 
Therefore, the degree of $P$ equals the degree of $\tilde{P}$. 
Let $d := \max \left\{m, 0 \right\} + \max \left\{k + l - m, 0 \right\} - k - 1$. 
Then, for any integers $k,\ l$ and $m$ with $k \leq l$, 
the degree of the polynomial $\tilde{P}$ is no more than $d$, 
and the coefficient of $x^{d}$ in $\tilde{P}$ is equal to
\begin{align*}
\begin{cases}
\,\displaystyle\left(\frac{c}{a b} \right)^{k} q^{k (m - k - l + 1)} 
\left\{a^{m - 2k} (a)_{k} (a q / c)_{k - m} - b^{m - 2k} (b)_{k} (b q / c)_{k - m} \right\} & (k = l), \\
\,\displaystyle\left(\frac{c}{a b} \right)^{k} q^{k (m - k - l + 1)} a^{m - k - l} 
\frac{(a)_{k} (a q / c)_{k - m}}{(a q / b)_{k - l}} & (k < l). 
\end{cases}
\end{align*}
\end{cor}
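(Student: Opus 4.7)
The plan is to derive the corollary entirely from the uniqueness statement of Theorem~\ref{uniqueness} together with the two alternative explicit formulas supplied by Theorems~\ref{main} and~\ref{main2}.

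First I would exploit uniqueness to obtain~\eqref{P,til(P)}. Both Theorem~\ref{main} and Theorem~\ref{main2} express $Q = Q(k,l,m)$ as the same nonzero rational prefactor
\[
-\frac{(1-a)(1-b)c}{(q-c)(1-c)}\,\frac{x^{1-\max\{m,0\}}}{(abqx/c)_{\max\{k+l-m,0\}-1}}
\]
multiplied, respectively, by $P$ and by $\lambda\tilde{P}$. Since Theorem~\ref{uniqueness} forces $Q$ to be uniquely determined as a rational function of $a,b,c,q,x$, the two expressions must agree, and cancelling the common prefactor gives $P = \lambda\tilde{P}$. The same identity is recovered from the two formulas for $R$ with $(k,l,m)$ shifted to $(k-1,l-1,m-1)$ and $(a,b,c)\mapsto(aq,bq,cq)$, providing a consistency check. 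In particular $\deg P = \deg \tilde{P}$.

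Next I would read off the degree bound directly from the piecewise definition of $\tilde{P}$ in Theorem~\ref{main2}. In each of the four cases the sum takes the form $\sum_{n=0}^{N-1}(\cdots)\,x^{N-1-n}$, and a one-line computation shows $N-1 = d := \max\{m,0\}+\max\{k+l-m,0\}-k-1$ in every case, yielding $\deg\tilde{P}\le d$. The coefficient of $x^d$ is extracted by taking $n=0$ in the relevant sum, giving $C_0 - D_{k-l}$ in the cases $\{m\ge 0,\ k+l-m\ge 0\}$ and $\{m<0,\ k+l-m\ge 0\}$, and $\tilde{C}_0 - \tilde{D}_{k-l}$ in the other two cases.

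To identify the two closed forms, I would substitute the definitions of $C_0,D_{k-l},\tilde{C}_0,\tilde{D}_{k-l}$ from Definition~\ref{def:til(P),C,D} and simplify with standard $q$-Pochhammer identities. The conceptual split between $k=l$ and $k<l$ arises because the index $k-l$ is negative when $k<l$: the defining formula of $D_{k-l}$ (resp. $\tilde{D}_{k-l}$) will show that this contribution either vanishes or collapses trivially, so only the $C_0$ (resp. $\tilde{C}_0$) term survives and produces the single-term expression involving $a$ alone. When $k=l$ the second index is $0$ and both summands contribute, and their difference exhibits the $a\leftrightarrow b$ antisymmetry that is expected from the symmetry of $\phi(a,b;c;q,x)$ under swapping $a$ and $b$. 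The main obstacle is bookkeeping rather than insight: the four sign regions must each be reduced, by careful manipulation of $q$-shifted factorials, signs, and powers of $q$, to the same unified formulas. Once this routine case analysis is in place the corollary follows without further input.
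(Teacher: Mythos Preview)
Your proposal is correct and matches the paper's approach exactly: uniqueness of $Q$ from Theorem~\ref{uniqueness} forces $P=\lambda\tilde P$, and the leading coefficient of $\tilde P$ is read off at $n=0$ in Definition~\ref{def:til(P),C,D}. The four-region bookkeeping you anticipate in fact collapses immediately, since at $n=0$ the ${}_4\phi_3$ factors (each having first numerator parameter $q^{-n}$) reduce to $1$, giving $C_0=\tilde C_0=\mu_1$ and $D_0=\tilde D_0=\mu_2$; combined with the convention $D_n=\tilde D_n:=0$ for $n<0$, the coefficient of $x^d$ is simply $\mu_1-\mu_2$ when $k=l$ and $\mu_1$ when $k<l$, which are exactly the stated expressions.
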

\begin{remark}\label{rem}
Let us call $P$ and $\tilde{P}$ the polynomial parts of $Q$ and $R$. 
Then, it follows from $(\ref{P,til(P)})$ and the definitions of $P$ and $\tilde{P}$ that 
Theorem~$\ref{main}$ is useful for calculating the coefficients of lower-order terms in 
the polynomial parts of $Q$ and $R$, 
while Theorem~$\ref{main2}$ is useful for calculating the coefficients of higher-order terms in 
the polynomial parts. 
\end{remark}

From either Theorem~$\ref{main}$ or Theorem~$\ref{main2}$, we obtain the following corollary 
which gives a relation between $Q$ and $R$. 
\begin{cor}\label{R=Q'}
The coefficients of the three term relation $(\ref{3tr})$ satisfy 
\begin{align*}
Q (k - 1, l - 1, m - 1) \bigg| _{(a, b, c) \mapsto (a q, b q, c q)} 
= \frac{(1 - a q) (1 - b q) x (c - a b q x)}{(1 - c) (1 - c q)} R (k, l, m). 
\end{align*}
\end{cor}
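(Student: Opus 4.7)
The plan is to derive this corollary directly from Theorem~\ref{main} by a term-by-term comparison, rather than going through Theorem~\ref{main2}. The first step is to write out $Q(k-1, l-1, m-1)$ using the explicit formula of Theorem~\ref{main} and apply the substitution $(a, b, c) \mapsto (aq, bq, cq)$ to every factor. The key observation that drives the whole proof is that, under this substitution, the polynomial part $P\bigl(k-1,l-1,m-1;\,a,b,c;\,x\bigr)$ transforms into exactly $P\bigl(k-1,l-1,m-1;\,aq,bq,cq;\,x\bigr)$, which is precisely the polynomial part appearing in $R(k,l,m)$. The identity therefore reduces to a matching of the scalar prefactors, the powers of $x$, and the single $q$-shifted factorial in the denominator.

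Next, I would track how the scalar prefactor transforms:
$$-\frac{(1-a)(1-b)c}{(q-c)(1-c)} \;\longmapsto\; -\frac{(1-aq)(1-bq)\cdot cq}{q(1-c)(1-cq)} \;=\; -\frac{(1-aq)(1-bq)c}{(1-c)(1-cq)},$$
and observe that the $x$-factor $x^{1-\max\{m-1,0\}}$ equals $x \cdot x^{-\max\{m-1,0\}}$, which is the corresponding $x$-factor on the right-hand side. The only nontrivial input is the denominator: the substitution sends $abqx/c$ to $abq^2x/c$, so I would use the one-step factorization $(abqx/c;q)_{M'} = (1 - abqx/c)(abq^2x/c;q)_{M'-1}$ together with $c - abqx = c(1 - abqx/c)$ to obtain
$$\frac{c}{(abq^2x/c;q)_{M'-1}} \;=\; \frac{c - abqx}{(abqx/c;q)_{M'}},$$
where $M' := \max\{k+l-m-1, 0\}$. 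Assembling these ingredients reproduces precisely the ratio $(1-aq)(1-bq)x(c-abqx)/[(1-c)(1-cq)]$ stated in the corollary.

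The main obstacle will be purely bookkeeping: I must check that the various $\max\{\cdot,0\}$ expressions shift correctly when $m$ is replaced by $m-1$ (they produce $\max\{m-1, 0\}$ and $\max\{k+l-m-1, 0\}$ in the right places, so that the same branch of the four-case definition of $P$ is selected on both sides of the identity), and that the key $q$-shifted factorial identity also holds at the boundary $M' = 0$, where $(abq^2x/c;q)_{-1}$ must be interpreted as $1/(1 - abqx/c)$ so that both sides collapse to $c - abqx$. No deeper information about the coefficients $A_n, B_n, \tilde{A}_n, \tilde{B}_n$ appearing in the definition of $P$ is needed, since those polynomial factors cancel identically; the corollary is essentially a reflection of how the prefactors and denominators of Theorem~\ref{main} behave under the simultaneous shift $(a,b,c,k,l,m) \mapsto (aq, bq, cq, k-1, l-1, m-1)$.
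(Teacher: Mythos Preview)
Your proposal is correct and follows essentially the same approach as the paper's own proof: the paper also derives Corollary~\ref{R=Q'} directly from Theorem~\ref{main} by substituting $(a,b,c)\mapsto(aq,bq,cq)$ into the formula for $Q(k-1,l-1,m-1)$ and identifying the result with the stated multiple of $R(k,l,m)$. Your write-up supplies more of the intermediate bookkeeping (the scalar prefactor, the $x$-power, the Pochhammer shift $(abqx/c)_{M'}=(1-abqx/c)(abq^2x/c)_{M'-1}$, and the $M'=0$ boundary check) than the paper's terse two-line computation, but the strategy is identical.
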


\section{\bf{The coefficients of three term relations for $\phi$}}
In this section, we prove Theorems~$\ref{uniqueness}\mbox{--}\ref{main2}$ and Corollaries~$\ref{saikouji}, \ref{R=Q'}$. 
First, we show that Theorem~$\ref{uniqueness}$. 
Next, we prove Theorems~$\ref{main}$ and $\ref{main2}$. 
For this purpose, we introduce the series 
\begin{align*}
\f{a}{b}{c}{x} := (1 - q)^{\gamma - \alpha - \beta + 1} \frac{(q)_{\infty} (c)_{\infty}}{(a)_{\infty} (b)_{\infty}} 
\p{a}{b}{c}{x}, 
\end{align*}
and rewrite the three term relation $(\ref{3tr})$ as 
\begin{align}\label{3tr'}
\f{a q^{k}}{b q^{l}}{c q^{m}}{x} 
= \tilde{Q} \cdot \f{a q}{b q}{c q}{x} + \tilde{R} \cdot \f{a}{b}{c}{x}. 
\end{align}
By comparing $(\ref{3tr})$ with $(\ref{3tr'})$, 
we find that $Q$ and $R$ can be expressed by $\tilde{Q}$ and $\tilde{R}$ as 
\begin{align}
Q &= (1 - q)^{k + l - m - 1} \frac{(c q)_{m - 1}}{(a q)_{k - 1} (b q)_{l - 1}} \tilde{Q}, \label{Q,tQ} \\
R &= (1 - q)^{k + l - m} \frac{(c)_{m}}{(a)_{k} (b)_{l}} \tilde{R}. \label{R,tR}
\end{align}
Thus we investigate $\tilde{Q}$ and $\tilde{R}$. 
We define four functions 
which are local solutions of a $q\mbox{-}$differential equation $E_{q} (a, b, c)$ 
defined by $L_{q} (a, b, c) y = 0$, where 
\begin{align*}
L_{q} (a, b, c) := x (c - a b q x) \Delta_{q}^{2} 
+ \left\{\!\frac{1 - c}{1 - q} + \frac{(1 - a) (1 - b) - (1 - a b q)}{1 - q} x \!\right\} \!\Delta_{q} 
- \frac{(1 - a) (1 - b)}{(1 - q)^{2}} 
\end{align*}
with 
\begin{align*}
\Delta_{q} y (x) = \frac{y (x) - y (q x)}{(1 - q) x}. 
\end{align*}
We also define first order $q\mbox{-}$differential operators called contiguity operators. 
By using the $q\mbox{-}$differential equation $E_{q} (a, b, c)$ and contiguity operators, 
we obtain linear equations with $\tilde{Q}$ and $\tilde{R}$, 
and by solving these equations, we express $\tilde{Q}$ and $\tilde{R}$ as ratios of infinite series. 
To express $\tilde{Q}$ and $\tilde{R}$ more explicitly, 
we introduce Heine's transformation formula for $\phi$ and Gasper's three formulas for basic hypergeometric series. 
By using these formulas, we obtain expressions for $\tilde{Q}$ and $\tilde{R}$ as rational functions, 
and complete the proof of Theorems~$\ref{main}$ and $\ref{main2}$. 
Finally, we obtain Corollaries~$\ref{saikouji}$ and $\ref{R=Q'}$ 
from Theorems~$\ref{uniqueness}\mbox{--}\ref{main2}$.

\subsection{Proof of Theorem~$\ref{uniqueness}$}\mbox{} \\

By using the method due to Vid\=unas \cite[3]{Vidunas} as reference, 
we show that Theorem~$\ref{uniqueness}$. For this purpose, 
we use the following summation formula which was independently discovered 
by Bailey \cite{Bailey} and Daum \cite{Daum} : 
\begin{align}\label{summation formula2}
\p{a}{b}{a q / b}{- \frac{q}{b}} 
= \frac{(-q)_{\infty} (a q ; q^{2})_{\infty} (a q^{2} / b^{2} ; q^{2})_{\infty}}{(-q / b)_{\infty} (a q / b)_{\infty}}, 
\quad \lvert q \rvert < \min\left\{1, \lvert b \rvert \right\}. 
\end{align}
This is a $q\mbox{-}$analogue of Kummer's summation formula 
\begin{align*}
\F{\alpha}{\beta}{\alpha + 1 - \beta}{-1} 
= \frac{\G (1 + (\alpha / 2)) \G (\alpha + 1 - \beta)}{\G (1 + \alpha) \G ((\alpha / 2) + 1 - \beta)}. 
\end{align*}
We also use the three term relation 
\begin{align}\label{(1,0,1)}
\phi (a q, b, c q) 
= \frac{(1 - b) (c - a b x)}{c - b} \phi (a q, b q, c q) 
- \frac{b (1 - c)}{c - b} \phi (a, b, c). 
\end{align}
This follows from Heine's \cite[Formulas $24.,\ 43.$ and $48.$]{Heine} three term relations 
\begin{align*}
\left( 1 - \frac{c}{a} \right) \phi (a q^{-1}, b, c) - \left(1 - \frac{c}{b} \right) \phi (a, b q^{-1}, c) 
&= \frac{c}{b} \left(1 - \frac{b}{a} \right) \left(1 - \frac{a b}{c q} x \right) \phi (a, b, c), \allowdisplaybreaks \\
\phi (a q, b, c) - \phi (a, b, c) &= \frac{a (1 - b) x}{1 - c} \phi (a q, b q, c q), \allowdisplaybreaks \\
\phi (a q, b q^{-1}, c) - \phi (a, b, c) &= \frac{(a q - b) x}{(1 - c) q} \phi (a q, b, c q). 
\end{align*}
Here, $\phi (a, b, c)$ denote $\phi (a, b; c; q, x)$. 

In order to prove by contradiction, let us assume that 
there are two distinct pairs of rational functions $(Q_{1}, R_{1})$ and $(Q_{2}, R_{2})$ 
satisfying $(\ref{3tr})$. 
Then, we have 
\begin{align*}
(Q_{1} - Q_{2}) \cdot \phi (a q, b q, c q) = (R_{2} - R_{1}) \cdot \phi (a, b, c). 
\end{align*}
This implies that $\phi (a q, b q, c q) / \phi (a, b, c)$ 
is a rational function of $a,\ b,\ c,\ q$ and $x$. 
Therefore, from $(\ref{(1,0,1)})$, 
\begin{align*}
\g{a}{b}{c}{x} := \p{a q}{b}{c q}{x} \bigg/ \p{a}{b}{c}{x} 
\end{align*}
is also a rational function of $a,\ b,\ c,\ q$ and $x$. 
However, from $(\ref{summation formula2})$, we have 
\begin{align*}
\g{a}{b}{a q / b}{- \frac{q}{b}} 
&= \p{a q}{b}{a q^{2} / b}{- \frac{q}{b}} \bigg/ \p{a}{b}{a q / b}{- \frac{q}{b}} \\
&= \left(1 - \frac{a q}{b} \right) 
\frac{(a q^{2} ; q^{2})_{\infty} (a q^{3} / b^{2} ; q^{2})_{\infty}}{(a q ; q^{2})_{\infty} (a q^{2} / b^{2} ; q^{2})_{\infty}}. 
\end{align*}
It turns out that $g (a) := g (a, b; a q / b; q, - q / b)$ is the rational function of $a$, 
while $g (a)$ has unbounded set of poles, so this is a contradiction. 
Thus Theorem~$\ref{uniqueness}$ is proved.

\subsection{Local solutions of $E_{q} (a, b, c)$}\mbox{} \\

We define four functions local solutions of the $q\mbox{-}$differential equation $E_{q} (a, b, c)$. 

Let $y_{i}$ ($i = 1, 2, 3, 4$) be the functions defined by
\begin{align*}
\y{1}{a}{b}{c}{x} = y_{1} (a, b; c; x) &:= \f{a}{b}{c}{x}, \allowdisplaybreaks \\
\y{2}{a}{b}{c}{x} = y_{2} (a, b; c; x) &:= x^{1 - \gamma} \f{a q / c}{b q / c}{q^{2} / c}{x}, \allowdisplaybreaks \\
\y{3}{a}{b}{c}{x} = y_{3} (a, b; c; x) &:= a^{\gamma - \alpha - \beta + 1} x^{-\alpha} 
\f{a}{a q / c}{a q / b}{\frac{c q}{a b x}}, \allowdisplaybreaks \\
\y{4}{a}{b}{c}{x} = y_{4} (a, b; c; x) &:= b^{\gamma - \alpha - \beta + 1} x^{-\beta} 
\f{b}{b q / c}{b q / a}{\frac{c q}{a b x}}. 
\end{align*}
We give two lemmas which are used in Section~$2. 4$ 
to obtain expressions for $\tilde{Q}$ and $\tilde{R}$. 
The following lemma follows from direct calculations. 
\begin{lem}\label{lem:1}
Let $y_{i} (a, b, c)$ denote $y_{i} (a, b; c; x)$. Then, we have 
\begin{align*}
\Delta_{q} y_{i} (a, b, c) = 
\begin{cases}
y_{i} (a q, b q, c q) & (i = 1, 2), \\
- \displaystyle\frac{a b}{c} \cdot y_{i} (a q, b q, c q) & (i = 3, 4). 
\end{cases}
\end{align*}
\end{lem}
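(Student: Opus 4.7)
The plan is to verify each of the four identities by termwise series computation. The key observation is that $\Delta_q$ acts on monomials by $\Delta_q(x^r) = \frac{1-q^r}{1-q}\,x^{r-1}$, and each $y_i$ is (after a possible factor $x^{1-\gamma}$ or $x^{-\alpha}$) a power series in $x$. Hence both sides of each identity are Frobenius-type series on the same exponent grid, and the claim reduces to matching coefficients of $x^r$ term by term.

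For $y_1 = f(a,b;c;x)$, I would first establish the underlying identity $\Delta_q \phi(a,b;c;q,x) = \frac{(1-a)(1-b)}{(1-q)(1-c)} \phi(aq,bq;cq;q,x)$ by applying $\Delta_q$ termwise to the series for $\phi$, using $(a)_{n+1} = (1-a)(aq)_n$, and reindexing. The prefactor defining $f$ is chosen precisely so that when $(a,b,c)\mapsto(aq,bq,cq)$ the extraneous factors $(1-a)(1-b)/((1-c)(1-q))$ are absorbed via $(a)_\infty = (1-a)(aq)_\infty$ (and analogous identities for $b$, $c$) and the drop in the $(1-q)$-exponent. For $y_2$, writing $y_2 = \sum_n e_n x^{1-\gamma+n}$ and applying $\Delta_q$ termwise produces $(1-q^{1-\gamma+n})/(1-q)\cdot x^{-\gamma+n}$; comparing to $y_2(aq,bq;cq;x) = x^{-\gamma} f(aq/c,bq/c;q/c;x)$, the Pochhammer ratio $(q^2/c)_n/(q/c)_n = (1-q^{n+1}/c)/(1-q/c)$ and the corresponding prefactor ratio telescope, using $c = q^\gamma$ so that $1 - q^{1-\gamma+n} = 1 - q^{n+1}/c$. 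For $y_3$ (and $y_4$ by the $a \leftrightarrow b$ symmetry of $\phi$), the series is in descending powers of $x$ with leading exponent $-\alpha$; applying $\Delta_q$ yields a factor $(1-q^{-\alpha-n})/(1-q)$, and the $-ab/c$ on the right-hand side emerges from the identity $(1-q^{-\alpha-n})/(1-aq^n) = -1/(aq^n)$ together with $(a)_n/(aq)_n = (1-a)/(1-aq^n)$ from the shift $a\mapsto aq$ and $q^{\gamma-\alpha-\beta} = c/(ab)$ arising from $a^{\gamma-\alpha-\beta+1}/(aq)^{\gamma-\alpha-\beta}$.

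The main obstacle is bookkeeping rather than conceptual content: keeping track of each $(1-q)$-power, each shift of an infinite $q$-Pochhammer product, and -- for $y_3, y_4$ -- the sign introduced by the negative exponent of $x$ and the descending argument $cq/(abx)$. Once the normalizations are handled carefully, each of the four identities collapses to a one-line match of Frobenius coefficients, which is why the lemma is stated as following from ``direct calculations.''
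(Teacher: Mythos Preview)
Your proposal is correct and matches the paper's own treatment: the paper simply states that the lemma ``follows from direct calculations,'' and your termwise verification --- computing $\Delta_q$ on the Frobenius-type series for each $y_i$ and matching the Pochhammer and $(1-q)$-normalization shifts --- is exactly the intended direct calculation. There is nothing to add beyond the bookkeeping you have already outlined.
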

By using a general theory of linear difference equations 
in \cite[Theorem~$2. 15$, p.$62$]{Elaydi}, we can verify the following lemma. 
\begin{lem}\label{lem:2}
On the assumption E1, 
$y_{i} (a, b; c; x) \, (i = 1, 2)$ are linearly independent solutions around $x = 0$ 
of $E_{q} (a, b, c)$, and $y_{i} (a, b; c; x) \, (i = 3, 4)$ are linearly independent solutions around $x = \infty$. 
\end{lem}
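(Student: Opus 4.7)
The plan is to verify that each $y_{i}$ satisfies $L_{q}(a,b,c)y=0$ and then to establish linear independence by reading off leading-order behaviour near $x=0$ or $x=\infty$, invoking the Elaydi result to conclude that two solutions with distinct characteristic exponents form a local fundamental system.

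For the solution check, the starting point is the standard fact that $\phi(a,b;c;q,x)$, and hence $y_{1}=f(a,b;c;x)$, satisfies $E_{q}(a,b,c)$; this can be verified by term-by-term series substitution using $\Delta_{q} x^{n}=\frac{1-q^{n}}{1-q}\, x^{n-1}$. For $y_{2}$, I would substitute $y=x^{1-\gamma}u(x)$ into $L_{q}(a,b,c)y=0$ and check that $u$ then satisfies $L_{q}(aq/c,bq/c,q^{2}/c)u=0$, so that the choice $u=f(aq/c,bq/c;q^{2}/c;x)$ produces a second local solution at $x=0$. For $y_{3}$ and $y_{4}$, I would perform the inversion $x\mapsto cq/(abx)$: this conjugates $L_{q}(a,b,c)$ into an operator whose local solutions at the new origin pull back to the desired local solutions at $x=\infty$, the $x^{-\alpha}$ and $x^{-\beta}$ prefactors accounting for the characteristic exponents there.

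Linear independence then reduces to comparing leading exponents. Near $x=0$, $y_{1}$ starts as $1+O(x)$ while $y_{2}$ starts as $x^{1-\gamma}(1+O(x))$; assumption E1 forces $c\notin q^{\mathbb{Z}}$, whence $1-\gamma\notin\mathbb{Z}$, so no non-trivial linear combination can vanish identically as a (formal) Puiseux series. Similarly near $x=\infty$, $y_{3}$ has leading term $x^{-\alpha}$ and $y_{4}$ has leading term $x^{-\beta}$; E1 gives $a/b\notin q^{\mathbb{Z}}$, hence $\alpha-\beta\notin\mathbb{Z}$, so the two leading monomials are incompatible. The Elaydi theorem \cite[Theorem~$2.15$]{Elaydi}, applied to the $q$-difference equation in Casoratian form, then promotes distinct leading exponents to linear independence of the full local fundamental system.

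The main obstacle I anticipate is the verification for $y_{3}$ and $y_{4}$: tracking how $L_{q}(a,b,c)$ transforms under $x\mapsto cq/(abx)$, and how $\Delta_{q}$ interacts with this involution (which exchanges $q$ and $q^{-1}$ scalings), requires careful bookkeeping of $q$-shifts, and the normalisation constants $a^{\gamma-\alpha-\beta+1}$ and $b^{\gamma-\alpha-\beta+1}$ must be matched so that the resulting $f$-type series at the new origin has exactly the declared parameters $(a,aq/c;aq/b)$ and $(b,bq/c;bq/a)$. Once this transformation is correctly set up, solution verification for $y_{1}$, $y_{2}$ is routine series manipulation, and the leading-exponent independence argument is the standard Frobenius-type reasoning adapted to the $q$-difference setting.
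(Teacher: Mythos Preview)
Your proposal is correct and is in fact considerably more detailed than what the paper offers: the paper gives no argument at all beyond the single sentence ``By using a general theory of linear difference equations in \cite[Theorem~2.15, p.~62]{Elaydi}, we can verify the following lemma.'' Your outline---term-by-term verification for $y_{1}$, the substitution $y=x^{1-\gamma}u$ for $y_{2}$, the inversion $x\mapsto cq/(abx)$ for $y_{3},y_{4}$, and then the leading-exponent comparison under E1 with Elaydi's Casoratian criterion---is exactly how one would flesh out that citation, so your approach and the paper's are the same in spirit.
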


\subsection{Contiguity operators}\mbox{} \\

We define first order $q\mbox{-}$differential operators called contiguity operators, 
and by combining these operators, we obtain a $q\mbox{-}$differential operator $H (k, l, m)$ 
which send the parameters $a,\ b$ and $c$ to $a q^{k},\ b q^{l}$ and $c q^{m}$, respectively. 

Let $H_{j} (a, b, c)$ and $B_{j} (a, b, c)$ be the first order $q\mbox{-}$differential operators defined by 
\begin{align*}
H_{1} (a, b, c) &:= \frac{1 - a}{1 - q} + a x \Delta_{q}, \allowdisplaybreaks \\
H_{2} (a, b, c) &:= \frac{1 - b}{1 - q} + b x \Delta_{q}, \allowdisplaybreaks \\
H_{3} (a, b, c) &:= \frac{(1 - q) c}{(c - a) (c - b)} 
\left\{- a - b + a b + \frac{a b}{c} + (1 - q) (c - a b x) \Delta_{q} \right\}, \allowdisplaybreaks \\
B_{1} (a, b, c) &:= \frac{(1 - q) a}{(q - a) (c - a)} 
\left\{- q + \frac{c q}{a} + a (1 - b) x - (1 - q) x (c - a b x) \Delta_{q} \right\}, \allowdisplaybreaks \\
B_{2} (a, b, c) &:= \frac{(1 - q) b}{(q - b) (c - b)} 
\left\{- q + \frac{c q}{b} + b (1 - a) x - (1 - q) x (c - a b x) \Delta_{q} \right\}, \allowdisplaybreaks \\
B_{3} (a, b, c) &:= \frac{q - c}{q (1 - q)} + \frac{c}{q} x \Delta_{q}. 
\end{align*}
See \cite[Formulas $42.,\ 43.$ and $44.$, p.$292$]{Heine} or \cite[Exercises $1.9$ (i) and (ii), p.$22$]{GR} 
for the definitions of $H_{1},\ H_{2}$ and $B_{3}$, and 
see \cite[Remark~$2.1.4$, p.$46$]{IKSY} for the method for deriving operators 
$B_{1},\ B_{2}$ and $H_{3}$ from $H_{1},\ H_{2}$ and $B_{3}$, respectively. 

The operators $H_{j}$ ($j = 1, 2, 3$) 
increase the parameters $a,\ b$ and $c$ by $q$ times, respectively, 
while the operators $B_{j}$ ($j = 1, 2, 3$) decrease these parameters by $q$ times. 
Hence, we call these six operators contiguity operators. 
In fact, by performing direct calculations, we can prove the following lemma used in Section~$2. 4$ 
to obtain expressions for $\tilde{Q}$ and $\tilde{R}$. 
\begin{lem}\label{lem:3}
Let us denote $y_{i} (a, b; c; x),\ y_{i} (a q^{\pm 1}, b; c; x),\ y_{i} (a, b q^{\pm 1}; c; x)$ 
and $y_{i} (a, b; c q^{\pm 1}; x)$ by $y_{i},\ y_{i} (a q^{\pm 1}),\ y_{i} (b q^{\pm 1})$ 
and $y_{i} (c q^{\pm 1})$, respectively. 
Then, we have 
\begin{align*}
H_{1} (a, b, c) y_{i} &= y_{i} (a q), 
& 
B_{1} (a, b, c) y_{i} &= y_{i} (a q^{-1}), \allowdisplaybreaks \\
H_{2} (a, b, c) y_{i} &= y_{i} (b q), 
& 
B_{2} (a, b, c) y_{i} &= y_{i} (b q^{-1}), \allowdisplaybreaks \\
H_{3} (a, b, c) y_{i} &= y_{i} (c q), 
& 
B_{3} (a, b, c) y_{i} &= y_{i} (c q^{-1}) 
\end{align*}
for $i = 1$ and $2$. Also, we have 
\begin{align*}
H_{1} (a, b, c) y_{i} &= - a \cdot y_{i} (a q), 
& 
B_{1} (a, b, c) y_{i} &= - \frac{q}{a} \cdot y_{i} (a q^{-1}), \allowdisplaybreaks \\
H_{2} (a, b, c) y_{i} &= - b \cdot y_{i} (b q), 
& 
B_{2} (a, b, c) y_{i} &= - \frac{q}{b} \cdot y_{i} (b q^{-1}), \allowdisplaybreaks \\
H_{3} (a, b, c) y_{i} &= - \frac{1}{c} \cdot y_{i} (c q), 
& 
B_{3} (a, b, c) y_{i} &= - \frac{c}{q} \cdot y_{i} (c q^{-1}) 
\end{align*}
for $i = 3$ and $4$. 
\end{lem}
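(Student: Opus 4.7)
The verification is by direct computation. For each of the twenty-four identities, my first move is to use Lemma~\ref{lem:1} to rewrite $\Delta_{q} y_{i}$ in terms of $y_{i}(aq,bq,cq)$ (with the sign $-ab/c$ for $i=3,4$); this removes $\Delta_{q}$ from every expression $H_{j}(a,b,c) y_{i}$ and $B_{j}(a,b,c) y_{i}$ and leaves a purely algebraic combination of $y_{i}(a,b,c)$ and $y_{i}(aq,bq,cq)$. The plan is then to expand both sides as series---in $x$ around $0$ for $i=1,2$ using Lemma~\ref{lem:2}, and in $cq/(abx)$ around $\infty$ for $i=3,4$---and to match coefficients.

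The number of cases collapses quickly once the symmetries are used. The pairs $(H_{1},B_{1})$ and $(H_{2},B_{2})$ are interchanged by swapping $a \leftrightarrow b$; the solutions $y_{1},y_{2}$ are already symmetric in $a,b$, while $y_{3},y_{4}$ are exchanged by the same swap. So it suffices to verify the identities for $H_{1},H_{3},B_{1},B_{3}$ acting on $y_{1},y_{2},y_{3}$. The cleanest case is $H_{1} y_{1} = y_{1}(aq)$: writing $y_{1} = K(a,b,c)\sum_{n}\frac{(a)_{n}(b)_{n}}{(q)_{n}(c)_{n}} x^{n}$, the prefactor ratio $K(aq,b,c)/K(a,b,c) = (1-a)/(1-q)$ combined with $(aq)_{n}/(a)_{n} = (1-aq^{n})/(1-a)$ reduces the claim to the coefficient identity $(1-a) + a(1-q^{n}) = 1 - aq^{n}$. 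The cases $B_{3} y_{i} = y_{i}(cq^{-1})$ for $i=1,2$ follow in the same spirit, since $B_{3}$ has the same simple first-order shape. These three instances (together with $H_{2}$) are essentially Heine's classical contiguous relations cited just after the operator definitions.

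The truly laborious cases are $H_{3},B_{1},B_{2}$, because these operators contain the factor $(1-q)x(c-abx)\Delta_{q}$; after Lemma~\ref{lem:1}, expanding $(c-abx)$ splits the resulting series into two shifted pieces that must be recombined using $(c)_{n+1}/(c)_{n} = 1-cq^{n}$ and $(a)_{n+1}/(a)_{n} = 1-aq^{n}$, plus the analogue for $(b)_{n}$. For $y_{3},y_{4}$ there is an additional layer: the prefactor $a^{\gamma-\alpha-\beta+1} x^{-\alpha}$ (and its $b$-analogue) rescales non-trivially under $\alpha \mapsto \alpha \pm 1$, and this rescaling is exactly what produces the extra multipliers $-a,\,-b,\,-1/c,\,-q/a,\,-q/b,\,-c/q$ on the right-hand side of the second block of identities; comparing $(aq)^{\gamma-\alpha-\beta} x^{-\alpha-1}$ with $a^{\gamma-\alpha-\beta+1} x^{-\alpha}$, after accounting for the change of argument $cq/(abx) \mapsto c/(bx)$ inside $f$, yields the factor $-a$ in $H_{1} y_{3} = -a\,y_{3}(aq)$, and the other five scalars are obtained by the same mechanism. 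The main obstacle I anticipate is purely organizational: keeping the $q$-shifted factorials, the normalizing constants $K(a,b,c)$, and the $x^{-\alpha},x^{-\beta}$ branches correctly aligned across all cases, rather than any single conceptually hard step.
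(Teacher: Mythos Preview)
Your proposal is correct and follows exactly the route the paper indicates: the paper's entire proof of this lemma is the single sentence ``by performing direct calculations, we can prove the following lemma,'' with no details given. Your plan supplies those details---reducing via the $a\leftrightarrow b$ symmetry, treating the first-order-shaped operators $H_{1},H_{2},B_{3}$ via the one-line coefficient identity, and isolating $H_{3},B_{1},B_{2}$ as the bookkeeping-heavy cases because of the $(c-abx)\Delta_{q}$ term---and correctly identifies that the extra scalars $-a,-b,-1/c,\dots$ for $i=3,4$ come from the rescaling of the prefactors $a^{\gamma-\alpha-\beta+1}x^{-\alpha}$, $b^{\gamma-\alpha-\beta+1}x^{-\beta}$ under parameter shifts. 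One minor remark: your invocation of Lemma~\ref{lem:2} is not really needed for the series expansion (the definitions of the $y_{i}$ already give them as explicit series), and in your worked example you compute $x\Delta_{q}$ termwise rather than via Lemma~\ref{lem:1}; both routes are equivalent, so this is only a small expository inconsistency, not a gap.
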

Let $S_{q} (a, b, c)$ denote the solution space of $E_{q} (a, b, c)$ 
on a simply connected domain in $\mathbb{C} - \left\{0 \right\}$. 
From Lemmas~$\ref{lem:2}$ and $\ref{lem:3}$, on the assumption E2, the mappings 
\begin{align*}
H_{1} (a, b, c) &: S_{q} (a, b, c) \to S_{q} (a q, b, c), \allowdisplaybreaks \\
H_{2} (a, b, c) &: S_{q} (a, b, c) \to S_{q} (a, b q, c), \allowdisplaybreaks \\
H_{3} (a, b, c) &: S_{q} (a, b, c) \to S_{q} (a, b, c q) 
\end{align*}
are linear isomorphisms, and their inverse mappings are given by 
$B_{1} (a q, b, c),\ B_{2} (a, b q, c)$ and $B_{3} (a, b, c q)$, respectively. 
Therefore, 
by combining $H_{j}$ and $B_{j}$ ($j = 1, 2, 3$), 
we obtain a linear isomorphism 
\begin{align*}
H (k, l, m) : S_{q} (a, b, c) \to S_{q} (a q^{k}, b q^{l}, c q^{m}) 
\end{align*}
for any integers $k,\ l$ and $m$.

\subsection{Expressions for $\tilde{Q}$ and $\tilde{R}$ as ratios of infinite series}\mbox{} \\

We express $\tilde{Q}$ and $\tilde{R}$ as ratios of infinite series 
defined by the sum of products of the series $y_{i}$ ($i = 1, 2, 3, 4$). 

Let $\mathbb{Q} (a, b, c, q, x)$ denote the field generated over $\mathbb{Q}$ by $a,\ b,\ c,\ q$ and $x$, 
and let us denote the ring of polynomials in $\Delta_{q}$ over $\mathbb{Q} (a, b, c, q, x)$ 
by $\mathbb{Q} (a, b, c, q, x) [\Delta_{q}]$. 
By the commutation relation $\Delta_{q} x = 1 + q x \Delta_{q}$, 
we can express $H (k, l, m)$ as 
\begin{align}\label{H}
H (k, l, m) = \rho (\Delta_{q}) \cdot L_{q} (a, b, c) + \hat{Q} \cdot \Delta_{q} + \hat{R}, 
\end{align}
where $\hat{Q},\ \hat{R} \in \mathbb{Q} (a, b, c, q, x)$ and 
$\rho (\Delta_{q}) \in \mathbb{Q} (a, b, c, q, x) [\Delta_{q}]$. 
From Lemmas~$\ref{lem:1}\mbox{--}\ref{lem:3}$ and the definition of $y_{1}$, 
operating $(\ref{H})$ to $y_{1} (a, b; c; x)$ yields 
\begin{align}\label{3tr''}
\f{a q^{k}}{b q^{l}}{c q^{m}}{x} = \hat{Q} \cdot \f{a q}{b q}{c q}{x} + \hat{R} \cdot \f{a}{b}{c}{x}. 
\end{align}
As stated in Section~$2. 1$, the pair $(Q, R)$ in $(\ref{3tr})$ are uniquely determined by $(k, l, m)$. 
It turns out that the pair $(\tilde{Q}, \tilde{R})$ in $(\ref{3tr'})$ are also uniquely determined by $(k, l, m)$. 
Therefore, by comparing $(\ref{3tr''})$ with $(\ref{3tr'})$, we find that $\hat{Q} = \tilde{Q}$ and $\hat{R} = \tilde{R}$. 
Namely, we have 
\begin{align}\label{H'}
H (k, l, m) = \rho (\Delta_{q}) \cdot L_{q} (a, b, c) + \tilde{Q} \cdot \Delta_{q} + \tilde{R}. 
\end{align}
From Lemmas~$\ref{lem:1}\mbox{--}\ref{lem:3}$, operating $(\ref{H'})$ to $y_{i} (a, b; c; x)$ (i = 1, 2, 3, 4) yields 
\begin{align}
\y{i}{a q^{k}}{b q^{l}}{c q^{m}}{x} 
&= \tilde{Q} \cdot \y{i}{a q}{b q}{c q}{x} + \tilde{R} \cdot \y{i}{a}{b}{c}{x} \quad (i = 1, 2), 
\label{3tr_y1,y2} \allowdisplaybreaks \\
\lambda_{0} \cdot \y{i}{a q^{k}}{b q^{l}}{c q^{m}}{x} 
&= - \frac{a b}{c} \tilde{Q} \cdot \y{i}{a q}{b q}{c q}{x} + \tilde{R} \cdot \y{i}{a}{b}{c}{x} \quad (i = 3, 4), 
\label{3tr_y3,y4}
\end{align}
where $\lambda_{0} := (-1)^{k + l - m} a^{k} b^{l} c^{-m} q^{\left\{k (k - 1) + l (l - 1) - m (m - 1) \right\} / 2}$. 
By solving two equations $(\ref{3tr_y1,y2})$ for $\tilde{Q}$ and $\tilde{R}$, we have 
\begin{align}
\tilde{Q} = \tilde{Q} (k, l, m) 
&= \frac{\displaystyle\Y{k}{l}{m}{a}{b}{c}{x}}{\displaystyle\Y{1}{1}{1}{a}{b}{c}{x}}, \label{tQ,Y} \allowdisplaybreaks \\
\tilde{R} = \tilde{R} (k, l, m) 
&= - \frac{\displaystyle\Y{k - 1}{l - 1}{m - 1}{a q}{b q}{c q}{x}}{\displaystyle\Y{1}{1}{1}{a}{b}{c}{x}}, \label{tR,Y}
\end{align}
where $Y$ is the infinite series defined by 
\begin{align}\label{def:Y}
\Y{k}{l}{m}{a}{b}{c}{x} := 
\y{1}{a q^{k}}{b q^{l}}{c q^{m}}{x} \y{2}{a}{b}{c}{x} 
- \y{2}{a q^{k}}{b q^{l}}{c q^{m}}{x} \y{1}{a}{b}{c}{x}. 
\end{align}
On the other hand, by solving two equations $(\ref{3tr_y3,y4})$ for $\tilde{Q}$ and $\tilde{R}$, we have 
\begin{align}
\tilde{Q} = \tilde{Q} (k, l, m) 
&= - \lambda_{0} \frac{c}{a b} 
\frac{\displaystyle\Z{k}{l}{m}{a}{b}{c}{x}}{\displaystyle\Z{1}{1}{1}{a}{b}{c}{x}}, \allowdisplaybreaks \label{tQ,Z} \\
\tilde{R} = \tilde{R} (k, l, m) 
&= - \lambda_{0} 
\frac{\displaystyle\Z{k - 1}{l - 1}{m - 1}{a q}{b q}{c q}{x}}{\displaystyle\Z{1}{1}{1}{a}{b}{c}{x}}, \label{tR,Z}
\end{align}
where $\tilde{Y}$ is the infinite series defined by 
\begin{align}\label{def:Z}
\Z{k}{l}{m}{a}{b}{c}{x} := 
\y{3}{a q^{k}}{b q^{l}}{c q^{m}}{x} \y{4}{a}{b}{c}{x} 
- \y{4}{a q^{k}}{b q^{l}}{c q^{m}}{x} \y{3}{a}{b}{c}{x}. 
\end{align}

In the next section, we prove Theorem~$\ref{main}$ from $(\ref{tQ,Y})$ and $(\ref{tR,Y})$, 
and we also prove Theorem~$\ref{main2}$ from $(\ref{tQ,Z})$ and $(\ref{tR,Z})$. 

\subsection{Proofs of Theorems~$\ref{main}$ and $\ref{main2}$}\mbox{} \\

We introduce four formulas for basic hypergeometric series. 
By using these formulas, 
we obtain expressions for $\tilde{Q}$ and $\tilde{R}$ as rational functions, 
and prove Theorems~$\ref{main}$ and $\ref{main2}$. 

Heine \cite[Formula XVIII., p.$325$]{Heine} showed that 
\begin{align}\tag{$\vec{\Phi 1}$}
\p{a}{b}{c}{x} = \frac{(a b x / c)_{\infty}}{(x)_{\infty}} \p{c / a}{c / b}{c}{\frac{a b}{c} x}. 
\end{align}
This is a $q\mbox{-}$analogue of Euler's transformation formula
\begin{align*}
\F{\alpha}{\beta}{\gamma}{x} = (1 - x)^{\gamma - \alpha - \beta} \F{\gamma - \alpha}{\gamma - \beta}{\gamma}{x}. 
\end{align*}

Assume that $m_{0}, m_{1}, \ldots, m_{p}$ are non-negative integers. 
Gasper showed that 
\begin{align}
&{}_{p + 2}\phi_{p + 1} \biggl(\genfrac..{0pt}{}{a_{0}, \,b_{0}, \,b_{1} q^{m_{1}}, \dotsc, b_{p} q^{m_{p}}}{b_{0} q^{m_{0} + 1}, \,b_{1}, \dotsc, b_{p}}\,; q,\, a_{0}^{-1} q^{m_{0} + 1 - (m_{1} + \dotsm + m_{p})}\biggr) \tag{$\vec{\Phi 2}$} \\
&= \frac{(q)_{\infty} (b_{0} q / a_{0})_{\infty} (b_{0} q)_{m_{0}} (b_{1} / b_{0})_{m_{1}} \dotsm (b_{p} / b_{0})_{m_{p}}}{(b_{0} q)_{\infty} (q / a_{0})_{\infty} (q)_{m_{0}} (b_{1})_{m_{1}} \dotsm (b_{p})_{m_{p}}} 
b_{0}^{m_{1} + \dotsm + m_{p} - m_{0}} \notag \\
&\times {}_{p + 2}\phi_{p + 1}\biggl(\genfrac..{0pt}{}{q^{-m_{0}}, \,b_{0}, \,b_{0} q / b_{1}, \dotsc, b_{0} q / b_{p}}{b_{0} q / a_{0}, \,b_{0} q^{1 - m_{1}} / b_{1}, \dotsc, b_{0} q^{1 - m_{p}} / b_{p}}\,; q,\, q\biggr), 
\quad \lvert a_{0}^{-1} q^{m_{0} + 1 - (m_{1} + \dotsm + m_{p})} \rvert < 1, \notag \allowdisplaybreaks \\
&{}_{p + 1}\phi_{p} \biggl(\genfrac..{0pt}{}{a_{0}, \,b_{1} q^{m_{1}}, \dotsc, b_{p} q^{m_{p}}}{b_{1}, \dotsc, b_{p}}\,; q,\, a_{0}^{-1} q^{- (m_{1} + \dotsm + m_{p})}\biggr) = 0, \quad \lvert a_{0}^{-1} q^{- (m_{1} + \dotsm + m_{p})} \rvert < 1, \tag{$\vec{\Phi 3}$} \allowdisplaybreaks \\
&{}_{p + 1}\phi_{p} \biggl(\genfrac..{0pt}{}{q^{-n}, \,b_{1} q^{m_{1}}, \dotsc, b_{p} q^{m_{p}}}{b_{1}, \dotsc, b_{p}}\,; q,\, q \biggr) = 0, \quad n > m_{1} + \dotsm + m_{p}, \tag{$\vec{\Phi 4}$}
\end{align}
which are $(20),\ (8)$ and $(15)$ in \cite{Gasper}, respectively. 
The formula $(\vec{\Phi 2})$ is a $q\mbox{-}$analogue of 
\begin{align*}
&{}_{p + 2}F_{p + 1} \biggl(\genfrac..{0pt}{}{\alpha, \,\beta, \,\beta_{1} + m_{1}, \dotsc, \beta_{p} + m_{p}}{\beta + \gamma + 1, \,\beta_{1}, \dotsc, \beta_{p}}\,; 1\biggr) \\
&= \frac{\G (\beta + \gamma + 1) \G (1 - \alpha)}{\G (\beta + 1 - \alpha) \G (\gamma + 1)} 
\frac{(\beta_{1} - \beta, m_{1}) \dotsm (\beta_{p} - \beta, m_{p})}{(\beta_{1}, m_{1}) \dotsm (\beta_{p}, m_{p})} \\
&\quad \times {}_{p + 2}F_{p + 1} \biggl(\genfrac..{0pt}{}{-\gamma, \,\beta, \,1 + \beta - \beta_{1}, \dotsc, 1 + \beta - \beta_{p}}{\beta + 1 - \alpha, \,1 + \beta - \beta_{1} - m_{1}, \dotsc, 1 + \beta - \beta_{p} - m_{p}}\,; 1\biggr), 
\end{align*}
provided $\Re (\gamma - \alpha) > m_{1} + \dotsm + m_{p} - 1$, where $(\alpha, n) := \G (\alpha + n) / \G (\alpha)$. 
This formula was also given by Gasper \cite[$(18)$]{Gasper}. 
Both $(\vec{\Phi 3})$ and $(\vec{\Phi 4})$ are $q\mbox{-}$analogues of Karlsson's \cite[$(12)$]{Karlsson} summation formula 
\begin{align*}
{}_{p + 1}F_{p} \biggl(\genfrac..{0pt}{}{\alpha, \,\beta_{1} + m_{1}, \dotsc, \beta_{p} + m_{p}}{\beta_{1}, \dotsc, \beta_{p}}\,; 1\biggr) = 0, 
\quad \Re (- \alpha) > m_{1} + \dotsm + m_{p}. 
\end{align*}

First, we prove Theorem~$\ref{main}$. 
The following is the definition of $P$ in Theorem~$\ref{main}$. 
\begin{definition}\label{def:P,A,B}
For any integers $k,\ l,\ m$ with $k \leq l$, let $P$ be the polynomial in $x$ defined by 
\begin{align*}
P = P \c{k}{l}{m}{a}{b}{c}{x} := 
\begin{cases}
\displaystyle\sum_{n = 0}^{l - 1} \left(A_{n} - B_{n - m} \right) x^{n} 
& (m \geq 0,\, k + l - m \geq 0), \\
\displaystyle\sum_{n = 0}^{m - k - 1} \left(\tilde{A}_{n} - \tilde{B}_{n - m} \right) x^{n} 
& (m \geq 0,\, k + l - m < 0), \\
\displaystyle\sum_{n = 0}^{l - m - 1} \left(A_{n + m} - B_{n} \right) x^{n} 
& (m < 0,\, k + l - m \geq 0), \\
\displaystyle\sum_{n = 0}^{- k - 1} \left(\tilde{A}_{n + m} - \tilde{B}_{n} \right) x^{n} 
& (m < 0,\, k + l - m < 0), 
\end{cases}
\end{align*}
where $A_{n} = B_{n} = \tilde{A}_{n} = \tilde{B}_{n} := 0$ for any negative integer $n$, and 
\begin{align*}
A_{n} 
&:= \frac{- c (a q / c)_{k - m} (b q / c)_{l - m}}{(q^{2} / c)_{- m - 1}} 
\frac{(c)_{m - n - 1}}{(q^{-n})_{n} (a)_{k - n} (b)_{l - n}} q^{m - n - 1}
\P{q^{-n}}{c q^{m - n - 1}}{a}{b}{c}{a q^{k - n}}{b q^{l - n}}{q}, \allowdisplaybreaks \\
B_{n} 
&:= \frac{(a q / c)_{n} (b q / c)_{n}}{(q)_{n} (q^{2} / c)_{n}} 
\P{q^{-n}}{c q^{- n - 1}}{c q^{m - k} / a}{c q^{m - l} / b}{c q^{m}}{c q^{- n} / a}{c q^{- n} / b}{q^{k + l - m + 1}}, \allowdisplaybreaks \\
\tilde{A}_{n} 
&:= \frac{(c)_{m}}{(a)_{k} (b)_{l}} 
\frac{(a q / c)_{n + k - m} (b q / c)_{n + l - m}}{(q)_{n} (q^{2} / c)_{n - m}} 
\P{q^{-n}}{c q^{m - n - 1}}{c / a}{c / b}{c}{c q^{m - k - n} / a}{c q^{m - l - n} / b}{q^{m - k - l + 1}}, 
\allowdisplaybreaks \\
\tilde{B}_{n} 
&:= \frac{(a q^{-n})_{n} (b q^{-n})_{n}}{(q^{-n})_{n} (c q^{-n - 1})_{n}} q^{-n} 
\P{q^{-n}}{c q^{- n - 1}}{a q^{k}}{b q^{l}}{c q^{m}}{a q^{-n}}{b q^{-n}}{q} 
\end{align*}
for any non-negative integer $n$. 
\end{definition}
To prove Theorem~$\ref{main}$, we give some lemmas. 
The following lemma follows from $(\vec{\Phi 1})$. 
\begin{lem}\label{Y,A,B}
The series $Y$ can be written as 
\begin{subequations}
\begin{align}
&\Y{k}{l}{m}{a}{b}{c}{x} \nonumber \\
&\quad = - \lambda_{1} (1 - q)^{m - k - l} 
x^{1 - \gamma - m} \frac{(a)_{k} (b)_{l}}{(c)_{m}} \frac{(a b q^{k + l - m} x / c)_{\infty}}{(x)_{\infty}} 
\left\{\sum_{n = 0}^{\infty} A_{n} x^{n} - x^{m} \sum_{n = 0}^{\infty} B_{n} x^{n} \right\}, \label{Y1} \allowdisplaybreaks \\
&\quad = - \lambda_{1} (1 - q)^{m - k - l} 
x^{1 - \gamma - m} \frac{(a)_{k} (b)_{l}}{(c)_{m}} \frac{(a b x / c)_{\infty}}{(x)_{\infty}} 
\left\{\sum_{n = 0}^{\infty} \tilde{A}_{n} x^{n} - x^{m} \sum_{n = 0}^{\infty} \tilde{B}_{n} x^{n} \right\}, \label{Y2} 
\end{align}
\end{subequations}
where 
\begin{align*}
\lambda_{1} := 
(1 - q)^{2 (\gamma - \alpha - \beta + 1)} 
\frac{(q)_{\infty}^{2} (c)_{\infty} (q^{2}/ c)_{\infty}}{(a)_{\infty} (b)_{\infty} (a q / c)_{\infty} (b q / c)_{\infty}}. 
\end{align*}
\end{lem}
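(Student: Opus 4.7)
The plan is to start from the definition (\ref{def:Y}), substitute the explicit formulas for $y_{1}, y_{2}$, and pull the common factor $x^{1-\gamma-m}$ outside. The problem then reduces to showing that
\[
x^{m} f(aq^{k},bq^{l};cq^{m};x)\, f(aq/c,bq/c;q^{2}/c;x) - f(aq^{k-m+1}/c, bq^{l-m+1}/c; q^{2-m}/c; x)\, f(a,b;c;x)
\]
equals the bracketed part of (\ref{Y1}) (or (\ref{Y2})) times $-\lambda_{1}(1-q)^{m-k-l}(a)_{k}(b)_{l}/(c)_{m}$ and the appropriate common factor $(abq^{k+l-m}x/c)_{\infty}/(x)_{\infty}$ or $(abx/c)_{\infty}/(x)_{\infty}$. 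Unpacking each $f$ via its definition, the prefactor of the first product of $\phi$'s simplifies exactly to $\lambda_{1}(1-q)^{m-k-l}(a)_{k}(b)_{l}/(c)_{m}$, while the prefactor of the second product yields the same factor multiplied by $(q^{2-m}/c)_{m}(aq/c)_{k-m}(bq/c)_{l-m}$, which will be absorbed into $A_{n}$ (resp.\ $\tilde{A}_{n}$).

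Next, apply Heine's transformation $(\vec{\Phi 1})$ to one $\phi$-factor in each product so as to expose the common infinite-product factor. For (\ref{Y1}), apply it to $\phi(aq^{k},bq^{l};cq^{m};x)$ in the first product and to $\phi(aq^{k-m+1}/c, bq^{l-m+1}/c; q^{2-m}/c; x)$ in the second; both produce the factor $(abq^{k+l-m}x/c)_{\infty}/(x)_{\infty}$, since $aq^{k-m+1}/c \cdot bq^{l-m+1}/c \cdot x/(q^{2-m}/c) = abq^{k+l-m}x/c$. For (\ref{Y2}), apply $(\vec{\Phi 1})$ instead to the \emph{other} $\phi$-factor in each product, namely to $\phi(aq/c, bq/c; q^{2}/c; x)$ and to $\phi(a,b;c;x)$; both then yield $(abx/c)_{\infty}/(x)_{\infty}$.

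After factoring out the common factor, expand the two remaining products of $\phi$'s as Cauchy products, and apply the standard reversal $(a;q)_{n-r} = (a;q)_{n}(-q/a)^{r}q^{\binom{r}{2}-rn}/(q^{1-n}/a;q)_{r}$ to each Pochhammer carrying the index $n-r$. The four triangular exponents $\binom{r}{2}-rn$ cancel, and the residual monomial in $a,b,c,q$ collapses the finite sum in $r$ into one of the ${}_{4}\phi_{3}$ series of Definition \ref{def:P,A,B}. Comparing prefactors then identifies the coefficient of $x^{n}$ in the first product with $B_{n}$ (resp.\ $\tilde{B}_{n}$) and the coefficient of $x^{n}$ in the second product with $A_{n}$ (resp.\ $\tilde{A}_{n}$); collecting the signs yields (\ref{Y1}) and (\ref{Y2}). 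The main obstacle is the bookkeeping: tracking the $q$-triangular cancellations, identifying the correct ${}_{4}\phi_{3}$ arguments, and matching every numerator and denominator parameter against Definition \ref{def:P,A,B}, which must be done for each of the four Cauchy products.
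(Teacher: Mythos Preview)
Your proposal is correct and follows essentially the same approach as the paper: apply Heine's transformation $(\vec{\Phi 1})$ to the shifted factors $\phi(aq^{k},bq^{l};cq^{m};x)$ and $\phi(aq^{k-m+1}/c,bq^{l-m+1}/c;q^{2-m}/c;x)$ for (\ref{Y1}), and to the unshifted factors $\phi(a,b;c;x)$ and $\phi(aq/c,bq/c;q^{2}/c;x)$ for (\ref{Y2}), then expand the resulting products as Cauchy sums and rewrite via the reversal identity $(a)_{n-r}=(-a)^{-r}q^{(1-n)r+\binom{r}{2}}(a)_{n}/(a^{-1}q^{1-n})_{r}$ to recognize the ${}_{4}\phi_{3}$'s of Definition~\ref{def:P,A,B}. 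The paper carries out exactly these steps, so your plan and the paper's proof coincide.
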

\begin{proof}
Applying $(\vec{\Phi 1})$ to $y_{1} (a q^{k}, b q^{l}; c q^{m}; x)$ and $y_{2} (a q^{k}, b q^{l}; c q^{m}; x)$ 
in $(\ref{def:Y})$ yields 
\begin{align*}
&\Y{k}{l}{m}{a}{b}{c}{x} \\
&\quad = (1 - q)^{2 (\gamma - \alpha - \beta + 1) + m - k - l} 
\frac{(q)_{\infty}^{2} (c q^{m})_{\infty} (q^{2}/ c)_{\infty}}{(a q^{k})_{\infty} (b q^{l})_{\infty} (a q / c)_{\infty} (b q / c)_{\infty}} x^{1 - \gamma - m} \frac{(a b q^{k + l - m} x / c)_{\infty}}{(x)_{\infty}} \\
&\quad \quad \times \left\{x^{m} \p{c q^{m - k} / a}{c q^{m - l} / b}{c q^{m}}{\frac{a b q^{k + l - m}}{c} x} 
\p{a q / c}{b q / c}{q^{2} / c}{x} \right. \\
&\quad \quad \quad \quad \left. - \frac{(a q / c)_{k - m} (b q / c)_{l - m} (c)_{m}}{(q^{2} / c)_{-m} (a)_{k} (b)_{l}} 
\p{q^{1 - k} / a}{q^{1 - l} / b}{q^{2 - m} / c}{\frac{a b q^{k + l - m}}{c} x} \p{a}{b}{c}{x} \right\}. 
\end{align*}
From 
$(a)_{n - i} = (-a)^{-i} q^{(1 - n) i + \binom{i}{2}} (a)_{n} / (a^{-1} q^{1 - n})_{i}$, 
we can find that the coefficient of $x^{n}$ in 
\begin{align*}
\p{c q^{m - k} / a}{c q^{m - l} / b}{c q^{m}}{\frac{a b q^{k + l - m}}{c} x} 
\p{a q / c}{b q / c}{q^{2} / c}{x} 
\end{align*}
is equal to $B_{n}$, and the coefficient of $x^{n}$ in 
\begin{align*}
\frac{(a q / c)_{k - m} (b q / c)_{l - m} (c)_{m}}{(q^{2} / c)_{-m} (a)_{k} (b)_{l}} 
\p{q^{1 - k} / a}{q^{1 - l} / b}{q^{2 - m} / c}{\frac{a b q^{k + l - m}}{c} x} \p{a}{b}{c}{x} 
\end{align*}
is equal to $A_{n}$. 
This proves $(\ref{Y1})$. 

Applying $(\vec{\Phi 1})$ to $y_{1} (a, b; c; x)$ and $y_{2} (a, b; c; x)$ 
in $(\ref{def:Y})$ yields 
\begin{align*}
&\Y{k}{l}{m}{a}{b}{c}{x} \\
&\quad = (1 - q)^{2 (\gamma - \alpha - \beta + 1) + m - k - l} 
\frac{(q)_{\infty}^{2} (c q^{m})_{\infty} (q^{2}/ c)_{\infty}}{(a q^{k})_{\infty} (b q^{l})_{\infty} (a q / c)_{\infty} (b q / c)_{\infty}} x^{1 - \gamma - m} \frac{(a b x / c)_{\infty}}{(x)_{\infty}} \\
&\quad \quad \times \left\{x^{m} \p{a q^{k}}{b q^{l}}{c q^{m}}{x} \p{q / a}{q / b}{q^{2} / c}{\frac{a b}{c} x} \right. \\
&\quad \quad \quad \quad \left. - \frac{(a q / c)_{k - m} (b q / c)_{l - m} (c)_{m}}{(q^{2} / c)_{-m} (a)_{k} (b)_{l}} 
\p{a q^{k + 1 - m} / c}{b q^{l + 1 - m} / c}{q^{2 - m} / c}{x} \p{c / a}{c / b}{c}{\frac{a b}{c} x} \right\}. 
\end{align*}
From 
$(a)_{n - i} = (- a)^{-i} q^{(1 - n) i + \binom{i}{2}} (a)_{n} / (a^{-1} q^{1 - n})_{i}$, 
we can find that the coefficient of $x^{n}$ in 
\begin{align*}
\p{a q^{k}}{b q^{l}}{c q^{m}}{x} \p{q / a}{q / b}{q^{2} / c}{\frac{a b}{c} x} 
\end{align*}
is equal to $\tilde{B}_{n}$, and the coefficient of $x^{n}$ in 
\begin{align*}
\frac{(a q / c)_{k - m} (b q / c)_{l - m} (c)_{m}}{(q^{2} / c)_{-m} (a)_{k} (b)_{l}} 
\p{a q^{k + 1 - m} / c}{b q^{l + 1 - m} / c}{q^{2 - m} / c}{x} \p{c / a}{c / b}{c}{\frac{a b}{c} x} 
\end{align*}
is equal to $\tilde{A}_{n}$. 
This proves $(\ref{Y2})$. 
\end{proof}

From $(\vec{\Phi 2})\mbox{--}(\vec{\Phi 4})$, we obtain the following lemma. 
\begin{lem}\label{(i)-(iv)}
For any integers $k,\ l$ and $m$ with $k \leq l$, we have 
\begin{align}
\tag{i} m \geq 0, \; k + l - m \geq 0 \; &\Rightarrow \; A_{n} - B_{n - m} = 0 \quad (n \geq l). \allowdisplaybreaks \\
\tag{ii} m \geq 0, \; k + l - m < 0 \; &\Rightarrow \; \tilde{A}_{n} - \tilde{B}_{n - m} = 0 \quad (n \geq m - k). 
\allowdisplaybreaks \\
\tag{iii} m < 0, \; k + l - m \geq 0 \; &\Rightarrow \; A_{n + m} - B_{n} = 0 \quad (n \geq l - m). \allowdisplaybreaks \\
\tag{iv} m < 0, \; k + l - m < 0 \; &\Rightarrow \; \tilde{A}_{n + m} - \tilde{B}_{n} = 0 \quad (n \geq - k). 
\end{align}
\end{lem}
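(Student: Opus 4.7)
The plan is to verify each of the four cases (i)--(iv) by applying one of Gasper's formulas $(\vec{\Phi 2})$, $(\vec{\Phi 3})$, $(\vec{\Phi 4})$ to the terminating ${}_{4}\phi_{3}$ series appearing in the definitions of $A_{n}$, $B_{n}$, $\tilde{A}_{n}$, $\tilde{B}_{n}$. Note first that in every case the factor $q^{-n}$ in the upper row of each ${}_{4}\phi_{3}$ forces it to terminate at index $n$, so the Gasper identities apply without further analytic restriction beyond the parameter identifications.

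In cases (i) and (ii) I would use the transformation formula $(\vec{\Phi 2})$ with $p = 2$. The idea is to apply $(\vec{\Phi 2})$ to the ${}_{4}\phi_{3}$ occurring in $B_{n - m}$ (resp.\ $\tilde{B}_{n - m}$), choosing the parameters $a_{0}, b_{0}, b_{1}, b_{2}, m_{0}, m_{1}, m_{2}$ so that the resulting ${}_{4}\phi_{3}$ at argument $q$ matches the one in $A_{n}$ (resp.\ $\tilde{A}_{n}$); the prefactor produced by $(\vec{\Phi 2})$ then combines with the rational prefactor of $B_{n - m}$ to reproduce the prefactor of $A_{n}$, yielding $A_{n} = B_{n - m}$. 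The index thresholds $n \geq l$ in (i) and $n \geq m - k$ in (ii) are precisely the inequalities that make the parameters $m_{0}, m_{1}, m_{2}$ non-negative so that $(\vec{\Phi 2})$ is legitimately applicable.

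In cases (iii) and (iv), where $m < 0$, I would proceed analogously after the reindexing $n \mapsto n + m$ which is already built into the statement, so that the roles of $A$ and $B$ are swapped; equivalently, one may apply the vanishing identities $(\vec{\Phi 3})$ and $(\vec{\Phi 4})$ to confirm that certain ${}_{p + 1}\phi_{p}$ limits of the ${}_{4}\phi_{3}$'s vanish individually once the index $n$ crosses the thresholds $l - m$ and $-k$.

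The main obstacle is the substantial bookkeeping of $q$-shifted factorial prefactors. Once $(\vec{\Phi 2})$ is invoked, the identification of parameters on both sides is rigid, and one has to reduce both $A_{n}$ (or $\tilde{A}_{n}$) and the transformed $B_{n - m}$ (or $\tilde{B}_{n - m}$) to a common form using the identity
\begin{align*}
(a)_{n - i} = (-a)^{-i} q^{(1 - n) i + \binom{i}{2}} \frac{(a)_{n}}{(a^{-1} q^{1 - n})_{i}},
\end{align*}
together with standard shifts and reflections of Pochhammer symbols. The computation itself is mechanical; the non-routine step is selecting, in each of the four regimes, the correct specialization of $(\vec{\Phi 2})$ (or of $(\vec{\Phi 3})$, $(\vec{\Phi 4})$) and verifying that the inequalities on the signs of $m$ and $k + l - m$ are precisely what the chosen specialization demands.
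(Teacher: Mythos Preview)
Your overall strategy matches the paper's: apply $(\vec{\Phi 2})$ to transform one ${}_4\phi_3$ into the other, and invoke $(\vec{\Phi 3})$ or $(\vec{\Phi 4})$ when a vanishing is required. However, there is a genuine gap in your range analysis. In case (i), applying $(\vec{\Phi 2})$ to the ${}_4\phi_3$ inside $B_{n-m}$ presupposes that this ${}_4\phi_3$ is actually present, i.e.\ that $n - m \geq 0$; combined with the non-negativity of $m_0 = n$, $m_1 = n - k$, $m_2 = n - l$, this yields $A_n = B_{n-m}$ only for $n \geq \max\{l, m\}$, not for all $n \geq l$ as you claim. When $l < m$ (which does occur in case (i), e.g.\ $(k,l,m) = (1,1,2)$), the intermediate range $l \leq n \leq m - 1$ has $B_{n-m} = 0$ by convention, and one must show $A_n = 0$ there separately; the paper does this via $(\vec{\Phi 4})$ applied directly to the ${}_4\phi_3$ in $A_n$.

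The same two-step structure---a $(\vec{\Phi 2})$ step for $n$ beyond both thresholds, plus a $(\vec{\Phi 3})$ or $(\vec{\Phi 4})$ step for the residual sub-range where one term vanishes by convention---is needed in \emph{each} of (i)--(iv), not only in (iii) and (iv). Your ``equivalently'' in the discussion of (iii) and (iv) is therefore misleading: the vanishing identities are not an alternative route to the $(\vec{\Phi 2})$ transformation but a necessary complement covering a sub-range that $(\vec{\Phi 2})$ alone cannot reach.
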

\begin{proof}
Let us prove (i). 
First, by using $(\vec{\Phi 2})$, we show that 
\begin{align}\tag{i-1}
m \geq 0, \; k + l - m \geq 0 \; \Rightarrow \; A_{n} - B_{n - m} = 0 \quad (n \geq \max\left\{l, m \right\}). 
\end{align}
By substituting $p = 2,\ a_{0} = q^{m - n},\ b_{0} = c q^{m - n - 1}$, 
$b_{1} = c q^{m - n} / a,\ b_{2} = c q^{m - n} / b$, 
$m_{0} = n,\ m_{1} = n - k$ and $m_{2} = n - l$ into $(\vec{\Phi 2})$, 
we find that if $k + l - m \geq 0$ and $n \geq \max \left\{0, l \right\}$, then 
\begin{align*}
&\P{q^{m - n}}{c q^{m - n - 1}}{c q^{m - k}/ a}{c q^{m - l} / b}{c q^{m}}{c q^{m - n} / a}{c q^{m - n} / b}{q^{k + l - m + 1}} \\
&= \frac{(q)_{\infty} (c)_{\infty} (c q^{m - n})_{n} (q / a)_{n - k} (q / b)_{n - l}}{(c q^{m - n})_{\infty} (q^{n + 1 - m})_{\infty} (q)_{n} (c q^{m - n}/ a)_{n - k} (c q^{m - n} / b)_{n - l}} c^{n - k - l} q^{(m - n - 1) (n - k - l)} \\
&\quad \times \P{q^{- n}}{c q^{m - n - 1}}{a}{b}{c}{a q^{k - n}}{b q^{l - n}}{q} \allowdisplaybreaks \\
&= \frac{(c)_{m} (q / a)_{n - k} (q / b)_{n - l}}{(q^{n + 1 - m})_{m} (c q^{m - n} / a)_{n - k} (c q^{m - n} / b)_{n - l}} c^{n - k - l} q^{(m - n - 1) (n - k - l)} 
\P{q^{- n}}{c q^{m - n - 1}}{a}{b}{c}{a q^{k - n}}{b q^{l - n}}{q}. 
\end{align*}
Thus, it turns out that if $k + l - m \geq 0$ and $n \geq \max \left\{0, l, m \right\}$, then 
\begin{align*}
B_{n - m} 
&= \frac{(a q / c)_{n - m} (b q / c)_{n - m}}{(q)_{n - m} (q^{2} / c)_{n - m}} \\
&\ \times 
\frac{(c)_{m} (q / a)_{n - k} (q / b)_{n - l} c^{n - k - l}}{(q^{n + 1 - m})_{m} (c q^{m - n} / a)_{n - k} (c q^{m - n} / b)_{n - l}} 
q^{(m - n - 1) (n - k - l)} 
\P{q^{- n}}{c q^{m - n - 1}}{a}{b}{c}{a q^{k - n}}{b q^{l - n}}{q} \allowdisplaybreaks \\
&= \frac{- c (a q / c)_{k - m} (b q / c)_{l - m}}{(q^{2} / c)_{- m - 1}} 
\frac{(c)_{m - n - 1}}{(q^{-n})_{n} (a)_{k - n} (b)_{l - n}} q^{m - n - 1}
\P{q^{-n}}{c q^{m - n - 1}}{a}{b}{c}{a q^{k - n}}{b q^{l - n}}{q} \allowdisplaybreaks \\
&= A_{n}. 
\end{align*}
This proves (i-1). 
Next, by using $(\vec{\Phi 4})$, we show that 
\begin{align}\tag{i-2}
m \geq 0, \; k + l - m \geq 0, \; l < m \; \Rightarrow \; A_{n} - B_{n - m} = 0 \quad (l \leq n \leq m - 1). 
\end{align}
By substituting $p = 3,\ b_{1} = c,\ b_{2} = a q^{k - n},\ b_{3} = b q^{l - n}$, 
$m_{1} = m - n - 1,\ m_{2} = n - k$ and $m_{3} = n - l$ into $(\vec{\Phi 4})$, 
we find that if $k + l - m \geq 0,\ l < m$ and $l \leq n \leq m - 1$, then 
\begin{align*}
\P{q^{-n}}{c q^{m - n - 1}}{a}{b}{c}{a q^{k - n}}{b q^{l - n}}{q} = 0. 
\end{align*}
Thus, it turns out that if $k + l - m \geq 0,\ l < m$ and $l \leq n \leq m - 1$, then $A_{n} - B_{n - m} = A_{n} = 0$. 
This proves (i-2). 
Consequently, from (i-1) and (i-2), we have (i). 

In the same way as we proved (i-1), by using $(\vec{\Phi 2})$, we can show that 
\begin{align}
\tag{ii-1} &m \geq 0, \; k + l - m < 0 \; \Rightarrow \; \tilde{A}_{n} - \tilde{B}_{n - m} = 0 
\quad (n \geq \max \left\{m, m - k \right\}). \allowdisplaybreaks \\
\tag{iii-1} &m < 0, \; k + l - m \geq 0 \; \Rightarrow \; A_{n + m} - B_{n} = 0 
\quad (n \geq \max \left\{-m, l - m \right\}). \allowdisplaybreaks \\
\tag{iv-1} &m < 0, \; k + l - m < 0 \; \Rightarrow \; \tilde{A}_{n + m} - \tilde{B}_{n} = 0 
\quad (n \geq \max \left\{-m, -k \right\}). 
\end{align}
Also, in the same way as we proved (i-2), by using $(\vec{\Phi 3})$ or $(\vec{\Phi 4})$, we can show that 
\begin{align}
\tag{ii-2} &m \geq 0, \; k + l - m < 0, \; m - k < m \; \Rightarrow \; \tilde{A}_{n} - \tilde{B}_{n - m} = 0 
\quad (m - k \leq n \leq m - 1). \allowdisplaybreaks \\
\tag{iii-2} &m < 0, \; k + l - m \geq 0, \; l - m < -m \; \Rightarrow \; A_{n + m} - B_{n} = 0 
\quad (l - m \leq n \leq - m - 1). \allowdisplaybreaks \\
\tag{iv-2} &m < 0, \; k + l - m < 0, \; - k < - m \; \Rightarrow \; \tilde{A}_{n + m} - \tilde{B}_{n} = 0 
\quad (- k \leq n \leq - m - 1). 
\end{align}
From these results, we can complete the proof of Lemma~$\ref{(i)-(iv)}$. 
\end{proof}

It immediately follows from Lemmas~$\ref{Y,A,B}$ and $\ref{(i)-(iv)}$ that 
\begin{lem}\label{lem:Y}
For any integers $k,\ l$ and $m$ with $k \leq l$, the series $Y$ can be expressed as 
\begin{align}
\Y{k}{l}{m}{a}{b}{c}{x} 
&= - \lambda_{1} (1 - q)^{m - k - l} \frac{(a)_{k} (b)_{l}}{(c)_{m}} \label{Y} \\
&\quad \times x^{1 - \gamma - \max \left\{m, 0 \right\}} 
\frac{(a b q^{\max \left\{k + l - m, 0 \right\}} x / c)_{\infty}}{(x)_{\infty}} P \c{k}{l}{m}{a}{b}{c}{x}, \nonumber 
\end{align}
where $\lambda_{1}$ is defined as in Lemma~$\ref{Y,A,B}$. 
\end{lem}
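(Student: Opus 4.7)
The identity is essentially a bookkeeping consequence of the two preceding lemmas, so the plan is to match the two presentations of $Y$ in Lemma~\ref{Y,A,B} against the four cases in Definition~\ref{def:P,A,B}, and invoke Lemma~\ref{(i)-(iv)} to truncate the infinite series into the polynomial $P$.

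First, I observe how the two target prefactors differ from the prefactors in Lemma~\ref{Y,A,B}: the target exponent $x^{1-\gamma-\max\{m,0\}}$ differs from $x^{1-\gamma-m}$ by a factor $x^{-m}$ exactly when $m<0$, and the $q$-shifted factorial $(abq^{\max\{k+l-m,0\}}x/c)_{\infty}$ differs from $(abq^{k+l-m}x/c)_{\infty}$ exactly when $k+l-m<0$. These two binary choices suggest: use expression $(\ref{Y1})$ when $k+l-m\geq 0$ (so the infinite factor already has the desired form) and use $(\ref{Y2})$ when $k+l-m<0$ (so the infinite factor $(abx/c)_{\infty}$ already has the desired form). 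Combined with the split on the sign of $m$, this yields the four cases of Definition~\ref{def:P,A,B}.

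Second, in each case I combine the two infinite sums $\sum_{n\geq 0}A_{n}x^{n}-x^{m}\sum_{n\geq 0}B_{n}x^{n}$ (or its tilded analog) into a single power series. When $m\geq 0$, after reindexing the second sum by $n\mapsto n-m$ this becomes $\sum_{n\geq 0}(A_{n}-B_{n-m})x^{n}$, using the convention $B_{n-m}=0$ for $n<m$. When $m<0$, the extra factor $x^{-m}$ from matching prefactors must first be absorbed; reindexing the first sum by $n\mapsto n-m$ and using $A_{n+m}=0$ for $n<-m$ produces $\sum_{n\geq 0}(A_{n+m}-B_{n})x^{n}$. Analogous reindexing applies to the $\tilde A_{n},\tilde B_{n}$ sums.

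Third, I invoke Lemma~\ref{(i)-(iv)}, which asserts that each of these four series actually terminates: the $n$th coefficient vanishes beyond the threshold $l$, $m-k$, $l-m$, or $-k$ in cases (i)--(iv), respectively. Hence the full (originally infinite) series equals the corresponding truncation, which is precisely $P$ as defined in Definition~\ref{def:P,A,B}. Substituting this polynomial back into the expression of Lemma~\ref{Y,A,B} with the prefactor rewritten as indicated in the first step yields $(\ref{Y})$. No serious obstacle arises; the only point requiring care is verifying that the residual monomial $x^{-m}$ (for $m<0$) or the ratio of the two infinite products (for $k+l-m<0$) is correctly absorbed when reindexing the sums, which is the reason Lemma~\ref{Y,A,B} supplies two distinct infinite-series expressions rather than one.
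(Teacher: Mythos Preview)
Your proposal is correct and follows precisely the approach of the paper, which simply states that the lemma ``immediately follows from Lemmas~\ref{Y,A,B} and~\ref{(i)-(iv)}''. You have spelled out in detail the case analysis that the paper leaves implicit: selecting $(\ref{Y1})$ or $(\ref{Y2})$ according to the sign of $k+l-m$, absorbing the stray factor $x^{-m}$ by reindexing when $m<0$, and then invoking Lemma~\ref{(i)-(iv)} to truncate; this is exactly the intended argument.
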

From $(\ref{tQ,Y}),\ (\ref{tR,Y})$ and Lemma~$\ref{lem:Y}$, 
we obtain the following lemma which gives expressions for $\tilde{Q}$ and $\tilde{R}$ as rational functions. 
\begin{lem}\label{tQ,tR}
Assume that $k \leq l$. Then, the coefficients of $(\ref{3tr'})$ can be expressed as 
\begin{align*}
\tilde{Q} (k, l, m) 
&= - (1 - q)^{m - k - l + 1} \frac{c (a)_{k} (b)_{l}}{(q - c) (c)_{m}} 
\frac{x^{1 - \max \left\{m, 0 \right\}}}{(a b q x / c)_{\max \left\{k + l - m, 0 \right\} - 1}} 
P \c{k}{l}{m}{a}{b}{c}{x}, \allowdisplaybreaks \\
\tilde{R} (k, l, m) 
&= - (1 - q)^{m - k - l} \frac{(a)_{k} (b)_{l}}{(c)_{m}} 
\frac{x^{- \max \left\{m - 1, 0 \right\}}}{(a b q x / c)_{\max \left\{k + l - m - 1, 0 \right\}}} 
P \c{k - 1}{l - 1}{m - 1}{a q}{b q}{c q}{x}. 
\end{align*}
\end{lem}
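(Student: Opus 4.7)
The plan is to substitute the closed form of the series $Y$ from Lemma~\ref{lem:Y} into the ratio expressions (\ref{tQ,Y}) and (\ref{tR,Y}) and simplify. No further identity is required beyond routine manipulation of $q$-shifted factorials.

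First, I would evaluate the denominator $\Y{1}{1}{1}{a}{b}{c}{x}$ in closed form. With $(k,l,m) = (1,1,1)$, the first case of Definition~\ref{def:P,A,B} applies (since $m \geq 0$ and $k+l-m = 1 \geq 0$), and the sum defining $P$ collapses to the single term $A_0$, because $B_{-1} = 0$. In the corresponding ${}_{4}\phi_{3}$ for $A_0$, the upper parameter $q^{-0}$ becomes $1$, forcing the series to terminate at its zeroth term and equal $1$. Unpacking the remaining $q$-Pochhammer symbols, including the one with negative index $(q^{2}/c)_{-2}$, yields
\begin{align*}
P\c{1}{1}{1}{a}{b}{c}{x} = A_{0} = \frac{(1-c)(c-q)}{c(1-a)(1-b)}.
\end{align*}
Inserting this into Lemma~\ref{lem:Y} gives a clean product formula for $\Y{1}{1}{1}{a}{b}{c}{x}$.

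Second, I would derive $\tilde{Q}$ from (\ref{tQ,Y}). The prefactors $-\lambda_{1}$ cancel between numerator and denominator; the powers of $1-q$ combine to $(1-q)^{m-k-l+1}$; the powers of $x$ combine to $x^{1-\max\{m,0\}}$; and the ratio of the infinite products simplifies via
\begin{align*}
\frac{(a b q^{\max\{k+l-m,0\}} x/c)_{\infty}}{(a b q x/c)_{\infty}} = \frac{1}{(a b q x/c)_{\max\{k+l-m,0\}-1}}.
\end{align*}
The $(1-c)$ factor in the denominator's $(c)_{1}$ cancels the $(1-c)$ in $A_{0}$, the $(1-a)(1-b)$ factor in $A_{0}$ cancels the $(a)_{1}(b)_{1}$ in the denominator, and rewriting $-1/(q-c) = 1/(c-q)$ produces the stated formula for $\tilde{Q}$. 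The derivation of $\tilde{R}$ from (\ref{tR,Y}) proceeds identically, this time with Lemma~\ref{lem:Y} applied to $\Y{k-1}{l-1}{m-1}{aq}{bq}{cq}{x}$ in the numerator; Lemma~\ref{lem:Y} produces exactly the polynomial $P\c{k-1}{l-1}{m-1}{aq}{bq}{cq}{x}$ with the shifted arguments as they appear in the statement, and the leading minus sign in (\ref{tR,Y}) accounts for the sign of $\tilde{R}$.

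The main obstacle is purely bookkeeping: tracking the precise exponents inside the $\max\{\cdot,0\}$ expressions across numerator and denominator, correctly handling negative-index Pochhammer symbols when evaluating $A_{0}$, and propagating the parameter shift $(a,b,c) \mapsto (aq,bq,cq)$ cleanly through the $\tilde{R}$ calculation. No new substantive identity is invoked; the entire proof is a bookkeeping exercise built on Lemma~\ref{lem:Y} and the explicit evaluation of $P\c{1}{1}{1}{a}{b}{c}{x}$.
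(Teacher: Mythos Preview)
Your proposal is correct and follows essentially the same route as the paper: evaluate $P(1,1,1;a,b,c;q,x)=A_{0}$ explicitly, substitute into Lemma~\ref{lem:Y} to obtain the closed form of $Y(1,1,1)$, and then take the ratios (\ref{tQ,Y}) and (\ref{tR,Y}). The only point to watch in the $\tilde{R}$ calculation is that $\lambda_{1}$ itself depends on $a,b,c$ (through both the Pochhammer factors and the exponent $2(\gamma-\alpha-\beta+1)$), so the shift $(a,b,c)\mapsto(aq,bq,cq)$ contributes an extra factor $(1-q)^{-2}\,\dfrac{(1-a)(1-b)(c-q)}{c(1-c)}$ which is exactly what is needed to land on the stated exponent $(1-q)^{m-k-l}$ and the prefactor $\dfrac{(a)_{k}(b)_{l}}{(c)_{m}}$; you flagged this bookkeeping already, so there is no gap.
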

\begin{proof}
By substituting $(k, l, m) = (1, 1, 1)$ into $(\ref{Y})$, we have
\begin{align}\label{Y(111)}
\Y{1}{1}{1}{a}{b}{c}{x} 
= \lambda_{1} \frac{x^{-\gamma}}{1 - q} \frac{(a b q x / c)_{\infty}}{(x)_{\infty}} \frac{q - c}{c}. 
\end{align}
From $(\ref{tQ,Y}),\ (\ref{Y})$ and $(\ref{Y(111)})$, we have the expression for $\tilde{Q}$. 

Replacing $k,\ l,\ m,\ a,\ b,\ c$ in $(\ref{Y})$ with respective 
$k - 1,\ l - 1,\ m - 1,\ a q,\ b q,\ c q$ yields 
\begin{align}
&\Y{k - 1}{l - 1}{m - 1}{a q}{b q}{c q}{x} \label{Y'} \\
&\quad = \lambda_{1} (1 - q)^{m - k - l - 1} \frac{q - c}{c} \frac{(a)_{k} (b)_{l}}{(c)_{m}} \nonumber \\
&\quad \quad \times x^{- \gamma - \max \left\{m - 1, 0 \right\}} 
\frac{(a b q^{\max \left\{k + l - m - 1, 0 \right\} + 1} x / c)_{\infty}}{(x)_{\infty}} 
P \c{k - 1}{l - 1}{m - 1}{a q}{b q}{c q}{x}. \nonumber 
\end{align}
From $(\ref{tR,Y}),\ (\ref{Y(111)})$ and $(\ref{Y'})$, we have the expression for $\tilde{R}$. 
Thus the lemma is proved. 
\end{proof}
From $(\ref{Q,tQ}),\ (\ref{R,tR})$ and Lemma~$\ref{tQ,tR}$, 
we can complete the proof of Theorem~$\ref{main}$.

Next, we prove Theorem~$\ref{main2}$. 
The following is the definition of $\tilde{P}$ in Theorem~$\ref{main2}$. 
\begin{definition}\label{def:til(P),C,D}
For any integers $k,\ l,\ m$ with $k \leq l$, let $\tilde{P}$ be the polynomial in $x$ defined by 
\begin{align*}
\tilde{P} = 
\tilde{P} \c{k}{l}{m}{a}{b}{c}{x} := 
\begin{cases}
\displaystyle\sum_{n = 0}^{l - 1} \left(C_{n} - D_{n + k - l} \right) x^{l - 1 - n} 
& (m \geq 0,\, k + l - m \geq 0), \\
\displaystyle\sum_{n = 0}^{m - k - 1} \left(\tilde{C}_{n} - \tilde{D}_{n + k - l} \right) x^{m - k - 1 - n} 
& (m \geq 0,\, k + l - m < 0), \\
\displaystyle\sum_{n = 0}^{l - m - 1} \left(C_{n} - D_{n + k - l} \right) x^{l - m - 1 - n} 
& (m < 0,\, k + l - m \geq 0), \\
\displaystyle\sum_{n = 0}^{- k - 1} \left(\tilde{C}_{n} - \tilde{D}_{n + k - l} \right) x^{- k - 1 - n} 
& (m < 0,\, k + l - m < 0), 
\end{cases}
\end{align*}
where $D_{n} = \tilde{D}_{n} := 0$ for any negative integer $n$, and 
\begin{align*}
C_{n} 
&:= \mu_{1} 
\frac{(b)_{n} (b q / c)_{n}}{(q)_{n} (b q / a)_{n}} \left(\frac{c q}{a b} \right)^{n} 
\P{q^{-n}}{a q^{-n} / b}{q^{1 - l} / b}{c q^{m - l} / b}{a q^{k - l + 1} / b}{q^{1 - n} / b}{c q^{-n} / b}{q}, 
\allowdisplaybreaks \\
D_{n} 
&:= \mu_{2} 
\frac{(q^{1 - k} / a)_{n} (c q^{m - k} / a)_{n}}{(q)_{n} (b q^{l - k + 1} / a)_{n}} q^{n} 
\P{q^{-n}}{a q^{k - l - n} / b}{a}{a q / c}{a q / b}{a q^{k - n}}{a q^{k - m - n + 1} / c}{q^{k + l - m + 1}}, 
\allowdisplaybreaks \\
\tilde{C}_{n} 
&:= \mu_{1} 
\frac{(a q^{k})_{n} (a q^{k - m + 1} / c)_{n}}{(q)_{n} (a q^{k - l + 1} / b)_{n}} 
\left(\frac{c q^{m - k - l + 1}}{a b} \right)^{n} 
\P{q^{-n}}{b q^{l - k - n} / a}{q / a}{c / a}{b q / a}{q^{1 - k - n} / a}{c q^{m - k - n} / a}{q}, 
\allowdisplaybreaks \\
\tilde{D}_{n} 
&:= \mu_{2} 
\frac{(q / b)_{n} (c / b)_{n}}{(q)_{n} (a q / b)_{n}} q^{n} 
\P{q^{-n}}{b q^{-n} / a}{b q^{l}}{b q^{l - m + 1} / c}{b q^{l - k + 1} / a}{b q^{-n}}{b q^{1 - n} / c}{q^{m - k - l + 1}} 
\end{align*}
with 
\begin{align*}
\mu_{1} &:= a^{m - k - l} \left(\frac{c}{a b} \right)^{k} q^{k (m - k - l + 1)} 
\frac{(a)_{k} (a q / c)_{k - m}}{(a q / b)_{k - l}}, \allowdisplaybreaks \\
\mu_{2} &:= b^{m - k - l} \left(\frac{c}{a b} \right)^{l} q^{l (m - k - l + 1)} 
\frac{(b)_{l} (b q / c)_{l - m}}{(b q / a)_{l - k}} 
\end{align*}
for any non-negative integer $n$. 
\end{definition}
To prove Theorem~$\ref{main2}$, we give some lemmas. 
In the same way as we proved Lemma~$\ref{Y,A,B}$, by using $(\vec{\Phi 1})$, we can show that 
\begin{lem}\label{Z,C,D}
The series $\tilde{Y}$ can be written as 
\begin{align*}
\Z{k}{l}{m}{a}{b}{c}{x} 
&= \lambda_{2} (1 - q)^{m - k - l} x^{- \alpha - \beta - k} 
\frac{(q / x)_{\infty}}{(c q^{m - k - l + 1} / (a b x))_{\infty}} 
\!\left\{\!\sum_{n = 0}^{\infty} C_{n} x^{-n} \!- x^{k - l} \sum_{n = 0}^{\infty} D_{n} x^{-n} \!\right\}\!, \allowdisplaybreaks \\
&= \lambda_{2} (1 - q)^{m - k - l} x^{- \alpha - \beta - k} 
\frac{(q / x)_{\infty}}{(c q / (a b x))_{\infty}} 
\left\{\sum_{n = 0}^{\infty} \tilde{C}_{n} x^{-n} - x^{k - l} \sum_{n = 0}^{\infty} \tilde{D}_{n} x^{-n} \right\}, 
\end{align*}
where 
\begin{align*}
\lambda_{2} := 
(1 - q)^{2 (\gamma - \alpha - \beta + 1)} (a b)^{\gamma - \alpha - \beta + 1} 
\frac{(q)_{\infty}^{2} (a q / b)_{\infty} (b q / a)_{\infty}}{(a)_{\infty} (b)_{\infty} (a q / c)_{\infty}(b q / c)_{\infty}}. 
\end{align*}
\end{lem}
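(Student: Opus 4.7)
The plan is to mirror the proof of Lemma~$\ref{Y,A,B}$ verbatim, replacing the role of the $y_{1},y_{2}$ solutions (expanded around $x=0$) by the $y_{3},y_{4}$ solutions (expanded around $x=\infty$), and tracking the effect of Heine's transformation $(\vec{\Phi 1})$ on the inner $\phi$-series of $y_{3}$ and $y_{4}$. Recall that
\begin{align*}
y_{3}(a,b;c;x) &= a^{\gamma-\alpha-\beta+1}x^{-\alpha}\,\f{a}{aq/c}{aq/b}{\tfrac{cq}{abx}}, \\
y_{4}(a,b;c;x) &= b^{\gamma-\alpha-\beta+1}x^{-\beta}\,\f{b}{bq/c}{bq/a}{\tfrac{cq}{abx}},
\end{align*}
so the inner $\phi$'s have the variable $cq/(abx)$. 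A direct computation using $(\vec{\Phi 1})$ shows that, for the $y_{3}$-series with shifted parameters $(aq^{k},bq^{l},cq^{m})$, the base $abx/c$ under the transformation becomes $q^{k+l-m+1}/x$ and the new parameters become $q^{1-l}/b,\,cq^{m-l}/b;\,aq^{k-l+1}/b$, while applying $(\vec{\Phi 1})$ instead to the unshifted $y_{3}$ produces base $q/x$ with parameters $q/b,\,c/b;\,aq/b$. The same applies to $y_{4}$ with the roles of $a$ and $b$ swapped.

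First I would apply $(\vec{\Phi 1})$ to the \emph{shifted} factors $y_{3}(aq^{k},bq^{l};cq^{m};x)$ and $y_{4}(aq^{k},bq^{l};cq^{m};x)$ appearing in $(\ref{def:Z})$, leaving the unshifted $y_{3}(a,b;c;x),\,y_{4}(a,b;c;x)$ alone. This places all four inner hypergeometric series on the same variable (the prefactor $(q/x)_{\infty}/(cq^{m-k-l+1}/(abx))_{\infty}$ appears as the Heine quotient common to both summands after extracting the overall $x^{-\alpha-\beta-k}$ power factor). The resulting product of two $\phi$-series in each summand can be expanded in descending powers of $x$; using the same identity $(a)_{n-i}=(-a)^{-i}q^{(1-n)i+\binom{i}{2}}(a)_{n}/(a^{-1}q^{1-n})_{i}$ employed in Lemma~$\ref{Y,A,B}$, the coefficient of $x^{l-1-n}$ in the first summand is recognized as a ${}_{4}\phi_{3}$ evaluated at the base $q$, and the coefficient of $x^{k-1-n}$ in the second summand is another ${}_{4}\phi_{3}$ evaluated at $q^{k+l-m+1}$. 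Matching with Definition~$\ref{def:til(P),C,D}$ identifies these as $C_{n}$ and $D_{n}$ respectively, yielding the first formula.

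To obtain the second formula I would apply $(\vec{\Phi 1})$ instead to the \emph{unshifted} factors $y_{3}(a,b;c;x)$ and $y_{4}(a,b;c;x)$. Now the common Heine denominator is $(cq/(abx))_{\infty}$, and the two inner series produced have upper parameter $aq/b$ (for the $y_{3}$ side) and $bq/a$ (for the $y_{4}$ side) with variable $q/x$, which is independent of $k,l,m$. Expanding the products and reading off coefficients in exactly the same way gives the ${}_{4}\phi_{3}$'s defining $\tilde{C}_{n}$ and $\tilde{D}_{n}$, with the prefactors $\mu_{1},\mu_{2}$ absorbing $(aq^{k-m+1}/c)_{n}$-style factors as in Definition~$\ref{def:til(P),C,D}$.

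The main obstacle will be bookkeeping: tracking the numerous Pochhammer ratios under $(\vec{\Phi 1})$, checking that the overall scalars $a^{\gamma-\alpha-\beta+1},b^{\gamma-\alpha-\beta+1}$ combine correctly with $(q)_{\infty}^{2}$ and the four shifted $f$-normalizations to give $\lambda_{2}$, and verifying that the power $x^{-\alpha-\beta-k}$ emerges after collecting the $x^{-\alpha}$ and $x^{-\beta}$ factors from both summands together with the $x^{k}$ shift coming from $(aq^{k})_{\infty}/(a)_{\infty}$-type rescalings inside the Heine prefactors. Once the two expansions are aligned and the ${}_{4}\phi_{3}$'s are written in the standard form, the identification with $C_{n},D_{n},\tilde{C}_{n},\tilde{D}_{n}$ is routine. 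No new ideas beyond those used in Lemma~$\ref{Y,A,B}$ are required; only careful symmetric computation.
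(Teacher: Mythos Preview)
Your approach is exactly the paper's---its proof of this lemma is literally the sentence ``In the same way as we proved Lemma~\ref{Y,A,B}, by using $(\vec{\Phi 1})$, we can show that\ldots''---and your plan to apply $(\vec{\Phi 1})$ to the shifted (resp.\ unshifted) factors $y_{3},y_{4}$ to obtain the first (resp.\ second) formula is correct. One bookkeeping correction for when you carry it out: after Heine the transformed variable of the shifted $y_{3}$ is $q/x$, not $q^{k+l-m+1}/x$, and the four inner series are \emph{not} all in the same variable---in each product one factor sits at $q/x$ and the other at $cq/(abx)$ or $cq^{m-k-l+1}/(abx)$, exactly paralleling Lemma~\ref{Y,A,B} where one factor is at $abq^{k+l-m}x/c$ and the other at $x$.
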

Also, in the same way as we proved Lemma~$\ref{(i)-(iv)}$, by using $(\vec{\Phi 2})\mbox{--}(\vec{\Phi 4})$, 
we can show that 
\begin{lem}\label{(i')-(iv')}
For any integers $k,\ l$ and $m$ with $k \leq l$, we have 
\begin{align}
\tag{i} m \geq 0, \; k + l - m \geq 0 \; &\Rightarrow \; C_{n} - D_{n + k - l} = 0 \quad (n \geq l). \allowdisplaybreaks \\
\tag{ii} m \geq 0, \; k + l - m < 0 \; &\Rightarrow \; \tilde{C}_{n} - \tilde{D}_{n + k - l} = 0 \quad (n \geq m - k). 
\allowdisplaybreaks \\
\tag{iii} m < 0, \; k + l - m \geq 0 \; &\Rightarrow \; C_{n} - D_{n + k - l} = 0 \quad (n \geq l - m). 
\allowdisplaybreaks \\
\tag{iv} m < 0, \; k + l - m < 0 \; &\Rightarrow \; \tilde{C}_{n} - \tilde{D}_{n + k - l} = 0 \quad (n \geq - k). 
\end{align}
\end{lem}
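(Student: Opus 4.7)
The plan is to mirror, case by case, the proof of Lemma~\ref{(i)-(iv)}, replacing the $A$/$B$ bookkeeping by the analogous one for $C$/$D$. Each of the four cases (i)-(iv) splits into two subclaims, (i-1)-(iv-1) and (i-2)-(iv-2). The first subclaim handles all $n$ beyond a certain threshold by applying Gasper's transformation $(\vec{\Phi 2})$ to the ${}_{4}\phi_{3}$ inside one of $C_{n}$, $D_{n+k-l}$ (or $\tilde{C}_{n}$, $\tilde{D}_{n+k-l}$), turning it into the ${}_{4}\phi_{3}$ inside the other; the second subclaim disposes of the finitely many intermediate indices $n$ left uncovered, by using $(\vec{\Phi 3})$ or $(\vec{\Phi 4})$ to force one of the two quantities to vanish outright.

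For (i-1) the idea is to take $p=2$ in $(\vec{\Phi 2})$ and specialize $(a_{0}, b_{0}, b_{1}, b_{2}; m_{0}, m_{1}, m_{2})$ so that the ${}_{4}\phi_{3}$ inside $D_{n+k-l}$ transforms into the ${}_{4}\phi_{3}$ inside $C_{n}$; the correct identification is dictated by the base variable $q^{k+l-m+1}$ of $D_{n}$, which must match the shape $a_{0}^{-1}q^{m_{0}+1-(m_{1}+m_{2})}$ demanded by $(\vec{\Phi 2})$. Once the series are identified, the accompanying rational prefactor must telescope: the factor produced by Gasper's formula has to combine with the prefactor $\mu_{2}(q^{1-k}/a)_{n}(cq^{m-k}/a)_{n}q^{n}/[(q)_{n}(bq^{l-k+1}/a)_{n}]$ of $D_{n+k-l}$ so as to produce exactly the prefactor $\mu_{1}(b)_{n}(bq/c)_{n}(cq/ab)^{n}/[(q)_{n}(bq/a)_{n}]$ of $C_{n}$, using the standard identity $(a)_{n-i}=(-a)^{-i}q^{(1-n)i+\binom{i}{2}}(a)_{n}/(a^{-1}q^{1-n})_{i}$. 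Cases (ii-1), (iii-1) and (iv-1) are handled by the analogous substitutions, with the direction of the transformation selected according to the signs of $m$ and $k+l-m$. For (i-2), nonempty only when $l<m$, I would take $p=3$ in $(\vec{\Phi 4})$ and choose $(b_{1},b_{2},b_{3};m_{1},m_{2},m_{3})$ so that the hypothesis $n>m_{1}+m_{2}+m_{3}$ becomes exactly $l\le n\le m-1$; the ${}_{4}\phi_{3}$ in one of $C_{n}$, $D_{n+k-l}$ then vanishes identically, and the other is seen to vanish either because its shifted index falls in the negative range (where the convention gives zero) or by a second invocation of $(\vec{\Phi 3})$/$(\vec{\Phi 4})$. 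Cases (ii-2), (iii-2) and (iv-2) follow by parallel arguments.

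The hard part, I expect, is purely combinatorial: picking the correct specialization of the six parameters of Gasper's formula for each of the four cases, and then verifying that the accompanying prefactors really do telescope to the required ratio of Pochhammer symbols and monomials in $a,b,c,q$. Conceptually nothing new is needed beyond what is done for Lemma~\ref{(i)-(iv)}, but the ${}_{4}\phi_{3}$ series in $C_{n},D_{n},\tilde{C}_{n},\tilde{D}_{n}$ each carry six parameters, so there is markedly more room for sign and $q$-power slips than in the $A$/$B$ calculation; the real work lies in carefully executing these identifications.
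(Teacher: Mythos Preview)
Your proposal is correct and matches the paper's approach exactly: the paper proves this lemma by the single sentence ``in the same way as we proved Lemma~\ref{(i)-(iv)}, by using $(\vec{\Phi 2})\mbox{--}(\vec{\Phi 4})$,'' and your plan spells out precisely that template (splitting each case into a $(\vec{\Phi 2})$-based identification plus a $(\vec{\Phi 3})$/$(\vec{\Phi 4})$ vanishing argument for the residual range). One caution: since the index shift here is $k-l$ rather than $-m$, the exact threshold in your (i-1) and the condition under which (i-2) is nonempty will not literally be ``$l<m$'' as in the $A$/$B$ case, so when you carry out the substitutions into $(\vec{\Phi 2})$ you should rederive those boundaries from scratch rather than transcribe them.
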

It immediately follows from Lemmas~$\ref{Z,C,D}$ and $\ref{(i')-(iv')}$ that 
\begin{lem}\label{lem:Z}
For any integers $k,\ l$ and $m$ with $k \leq l$, the series $\tilde{Y}$ can be expressed as 
\begin{align}
\Z{k}{l}{m}{a}{b}{c}{x} 
&= \lambda_{2} (1 - q)^{m - k - l} x^{- \alpha - \beta + 1 - \max \left\{m, 0 \right\} - \max \left\{k + l - m, 0 \right\}} 
\label{Z} \\
&\quad \times \frac{(q / x)_{\infty}}{(c q^{1 - \max \left\{k + l - m, 0 \right\}} / (a b x))_{\infty}} 
\tilde{P} \c{k}{l}{m}{a}{b}{c}{x}, \nonumber
\end{align}
where $\lambda_{2}$ is defined as in Lemma~$\ref{Z,C,D}$. 
\end{lem}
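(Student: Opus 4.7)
The plan is to combine the two expansions of $\tilde{Y}$ in Lemma~\ref{Z,C,D} with the vanishing identities in Lemma~\ref{(i')-(iv')} via a case split on the signs of $m$ and $k+l-m$. When $k+l-m \geq 0$ (cases (i) and (iii) of Lemma~\ref{(i')-(iv')}) I would start from the first expression in Lemma~\ref{Z,C,D}, and when $k+l-m < 0$ (cases (ii) and (iv)) from the second. This choice already matches the denominator $(cq^{1-\max\{k+l-m,0\}}/(abx))_\infty$ that appears in the target formula, since the exponent $m-k-l+1$ coincides with $1-\max\{k+l-m,0\}$ exactly when $k+l-m \geq 0$, and equals $1$ otherwise.

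In each case I would rewrite the braces from Lemma~\ref{Z,C,D} as a single power series in $x^{-1}$. Shifting the index of the second sum by $l-k \geq 0$ and using the convention $D_{j}=\tilde{D}_{j}=0$ for $j < 0$ fixed in Definition~\ref{def:til(P),C,D}, one obtains
\[
\sum_{n=0}^{\infty} C_{n}\, x^{-n} - x^{k-l}\sum_{n=0}^{\infty} D_{n}\, x^{-n} = \sum_{n=0}^{\infty} \bigl(C_{n} - D_{n+k-l}\bigr)\, x^{-n},
\]
and the analogous identity with tildes. Lemma~\ref{(i')-(iv')} then says that the combined summand vanishes once $n \geq N$, where $N$ equals $l,\,m-k,\,l-m,$ or $-k$ in cases (i)–(iv) respectively. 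Factoring $x^{-(N-1)}$ out of the resulting finite sum reverses the order of its coefficients and produces exactly the polynomial $\tilde{P}$ defined in Definition~\ref{def:til(P),C,D}.

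All that remains is arithmetic bookkeeping on the powers of $x$. The prefactor in Lemma~\ref{Z,C,D} contributes $x^{-\alpha-\beta-k}$, to which the reindexing appends an extra $x^{-(N-1)}$; so one only needs
\[
N - 1 + k \;=\; \max\{m,0\} + \max\{k+l-m,0\} - 1,
\]
which one verifies directly in each of the four cases (for instance $l-1+k = m+(k+l-m)-1$ in case (i), and $-k-1+k = -1 = 0+0-1$ in case (iv)). Combining this with the matching of denominators noted above, and observing that the factor $\lambda_{2}(1-q)^{m-k-l}$ is carried through untouched, yields the asserted expression for $\tilde{Y}$.

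The main obstacle, such as it is, is purely organizational: keeping the four sign cases, the index shifts, and the convention that $D_{j}$ and $\tilde{D}_{j}$ vanish for $j<0$ synchronized throughout. No new analytic input is needed beyond the two lemmas already in hand, which is consistent with the statement's claim that Lemma~\ref{lem:Z} follows immediately.
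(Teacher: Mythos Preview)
Your proposal is correct and follows exactly the approach the paper intends: the paper states that Lemma~\ref{lem:Z} ``immediately follows from Lemmas~\ref{Z,C,D} and~\ref{(i')-(iv')}'' without further detail, and your write-up simply spells out that immediate step --- choosing between the two expansions in Lemma~\ref{Z,C,D} according to the sign of $k+l-m$, collapsing the two sums via the index shift and the convention $D_{j}=\tilde D_{j}=0$ for $j<0$, truncating by Lemma~\ref{(i')-(iv')}, and bookkeeping the resulting power of $x$. There is nothing to add or correct.
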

From $(\ref{tQ,Z}),\ (\ref{tR,Z})$ and Lemma~$\ref{lem:Z}$, 
we obtain the following lemma which gives expressions for $\tilde{Q}$ and $\tilde{R}$ 
as rational functions. 
\begin{lem}\label{tQ,tR-2}
Assume that $k \leq l$. Then, the coefficients of $(\ref{3tr'})$ can be expressed as 
\begin{align*}
\tilde{Q} (k, l, m) 
&= (-1)^{k + l - m + M} (1 - q)^{m - k - l + 1} q^{\left\{k (k - 1) + l (l - 1) - m (m - 1) + M (M - 1) \right\} / 2} \\
&\quad \times \frac{a^{k} b^{l}}{(b - a) c^{m - 1}} \left(\frac{a b}{c} \right)^{M} 
\frac{x^{1 - \max \left\{m, 0 \right\}}}{(a b q x / c)_{M - 1}} 
\tilde{P} \c{k}{l}{m}{a}{b}{c}{x}, \allowdisplaybreaks \\
\tilde{R} (k, l, m) 
&= (-1)^{k + l - m - 1+ M'} (1 - q)^{m - k - l} q^{\left\{k (k - 1) + l (l - 1) - m (m - 1) + M' (M' + 1) \right\} / 2} \\
&\quad \times \frac{(1 - a) (1 - b) a^{k} b^{l}}{(b - a) c^{m}} \left(\frac{a b}{c} \right)^{M' - 1} 
\frac{x^{- \max \left\{m - 1, 0 \right\}}}{(a b q x / c)_{M'}} 
\tilde{P} \c{k - 1}{l - 1}{m - 1}{a q}{b q}{c q}{x}, 
\end{align*}
where $M := \max\left\{k + l - m, 0 \right\}$ and $M' := \max\left\{k + l - m - 1, 0 \right\}$. 
\end{lem}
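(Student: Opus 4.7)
The plan is to mimic the derivation of Lemma~\ref{tQ,tR} verbatim, replacing each appeal to Lemma~\ref{lem:Y} by an appeal to Lemma~\ref{lem:Z} and each appeal to $(\ref{tQ,Y})$, $(\ref{tR,Y})$ by an appeal to $(\ref{tQ,Z})$, $(\ref{tR,Z})$. All the hard combinatorial work is already encoded in Lemma~\ref{lem:Z}; what remains is bookkeeping.

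First I would specialize $(\ref{Z})$ at $(k,l,m)=(1,1,1)$, so that $M=1$ and the infinite $q$-Pochhammer in the denominator reduces to $(c/(abx))_\infty$. The polynomial $\tilde{P}\c{1}{1}{1}{a}{b}{c}{x}$ is $C_0-D_0$ under the convention that ${}_4\phi_3$ with leading argument $q^{-0}=1$ terminates at the $0$-th term; the two base factors $\mu_1,\mu_2$ specialize to $c(1-a)/(a^2b)$ and $c(1-b)/(ab^2)$, and a line of algebra gives
\[\tilde{P}\c{1}{1}{1}{a}{b}{c}{x}=\frac{c(b-a)}{a^2b^2},\]
which is the source of the $(b-a)$ factor that appears in the denominators of the lemma.

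Next I would form the quotient $\tilde{Y}(k,l,m;a,b,c)/\tilde{Y}(1,1,1;a,b,c)$ using $(\ref{Z})$. The common constants $\lambda_2$, the monomial $x^{-\alpha-\beta}$ and the factor $(q/x)_\infty$ cancel cleanly, leaving
\[\frac{(c/(abx))_\infty}{(cq^{1-M}/(abx))_\infty}=\frac{1}{(cq^{1-M}/(abx))_{M-1}}.\]
I would then apply the standard inversion $(y;q)_n=(-y)^nq^{\binom{n}{2}}(q^{1-n}/y;q)_n$ with $y=abqx/c$ and $n=M-1$ to rewrite this reciprocal as $(-abx/c)^{M-1}q^{M(M-1)/2}/(abqx/c)_{M-1}$. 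Multiplying the result by $-\lambda_0 c/(ab)$ from $(\ref{tQ,Z})$, expanding $\lambda_0$, and collecting the $(-1)$-power, the $q$-power, and the monomials in $a,b,c,x$ then yields precisely the stated expression for $\tilde{Q}(k,l,m)$.

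For $\tilde{R}(k,l,m)$ I would repeat the same manoeuvre on $\tilde{Y}(k-1,l-1,m-1;aq,bq,cq)$, using that under the shift $(a,b,c)\mapsto(aq,bq,cq)$ the argument $abqx/c$ is invariant, while $M$ becomes $M'=\max\{k+l-m-1,0\}$; the prefactor $-\lambda_0$ from $(\ref{tR,Z})$ is \emph{unshifted}, and the missing $(1-a)(1-b)$ in the numerator of $\tilde{R}$ comes from expanding $\lambda_2'/\lambda_2$ where the shifted $(a)_\infty,(b)_\infty$ produce $1/[(1-a)(1-b)]$ cancelling, while the shifted $(aq/c)_\infty,(bq/c)_\infty$ produce additional factors. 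The main bookkeeping obstacle throughout, and the only subtle step, is to verify that the sign $(-1)^{M-1}$ (resp.\ $(-1)^{M'}$) and the $q$-power $q^{M(M-1)/2}$ (resp.\ $q^{M'(M'+1)/2}$) produced by the inversion identity combine correctly with the $(-1)^{k+l-m}$ and $q^{\{k(k-1)+l(l-1)-m(m-1)\}/2}$ supplied by $\lambda_0$, and that the surviving powers of $a,b,c,x$ assemble into the compact form $\frac{a^kb^l}{(b-a)c^{m-1}}(ab/c)^M\cdot x^{1-\max\{m,0\}}/(abqx/c)_{M-1}$ (resp.\ its $R$-counterpart). Once this verification is carried out, $(\ref{Q,tQ})$ and $(\ref{R,tR})$ complete the passage from $(\tilde{Q},\tilde{R})$ to $(Q,R)$ and finish the proof of Theorem~\ref{main2}.
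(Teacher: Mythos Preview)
Your proposal is correct and follows essentially the same route as the paper: specialize $(\ref{Z})$ at $(k,l,m)=(1,1,1)$, divide using $(\ref{tQ,Z})$ and $(\ref{tR,Z})$, and simplify. You even make explicit the one algebraic step the paper leaves to the reader, namely the inversion $(y;q)_n=(-y)^nq^{\binom{n}{2}}(q^{1-n}/y;q)_n$ needed to turn $(cq^{1-M}/(abx))_{M-1}$ into $(abqx/c)_{M-1}$.

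One small correction to your bookkeeping narrative for $\tilde R$: under $(a,b,c)\mapsto(aq,bq,cq)$ the factors $(aq/c)_\infty$ and $(bq/c)_\infty$ in $\lambda_2$ are \emph{invariant}, so they do not contribute anything. The extra $(1-a)(1-b)$ indeed arises from the shift of $(a)_\infty,(b)_\infty$ in the denominator of $\lambda_2$, while the remaining ``other factors'' in $\lambda_2'/\lambda_2$ come from the shift of the exponents in $(1-q)^{2(\gamma-\alpha-\beta+1)}$ and $(ab)^{\gamma-\alpha-\beta+1}$ (recall $q^{\gamma-\alpha-\beta}=c/(ab)$). With that adjustment your computation goes through exactly and reproduces the stated formula for $\tilde R(k,l,m)$.
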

\begin{proof}
By substituting $(k, l, m) = (1, 1, 1)$ into $(\ref{Z})$, we have
\begin{align}\label{Z(111)}
\Z{1}{1}{1}{a}{b}{c}{x} 
= \lambda_{2} \frac{x^{- \alpha - \beta - 1}}{1 - q} 
\frac{(q / x)_{\infty}}{(c / (a b x))_{\infty}} \frac{(b - a) c}{a^{2} b^{2}}. 
\end{align}
From $(\ref{tQ,Z}),\ (\ref{Z})$ and $(\ref{Z(111)})$, we have the expression for $\tilde{Q}$. 

Replacing $k,\ l,\ m,\ a,\ b,\ c$ in $(\ref{Z})$ with respective 
$k - 1,\ l - 1,\ m - 1,\ a q,\ b q,\ c q$ yields 
\begin{align}
&\Z{k - 1}{l - 1}{m - 1}{a q}{b q}{c q}{x} \label{Z'} \\
&\quad = \lambda_{2} (1 - q)^{m - k - l - 1} 
x^{- \alpha - \beta - 1 - \max \left\{m - 1, 0 \right\}- \max \left\{k + l - m - 1, 0 \right\}} \nonumber \\
&\quad \quad \times 
\frac{(q / x)_{\infty}}{(c q^{- \max \left\{k + l - m - 1, 0 \right\}} / (a b x))_{\infty}} 
\tilde{P} \c{k - 1}{l - 1}{m - 1}{a q}{b q}{c q}{x}. \nonumber 
\end{align}
From $(\ref{tR,Z}),\ (\ref{Z(111)})$ and $(\ref{Z'})$, we have the expression for $\tilde{R}$. 
Thus the lemma is proved. 
\end{proof}
From $(\ref{Q,tQ}),\ (\ref{R,tR})$ and Lemma~$\ref{tQ,tR-2}$, 
we can complete the proof of Theorem~$\ref{main2}$.

\subsection{Proofs of Corollaries~$\ref{saikouji}$ and $\ref{R=Q'}$}\mbox{} \\

By using Theorems~$\ref{uniqueness}\mbox{--}\ref{main2}$, we show that Corollaries~$\ref{saikouji}$ and $\ref{R=Q'}$. 

First, we prove Corollary~$\ref{saikouji}$. 
From Theorem~$\ref{uniqueness}$, we find that $Q$ in Theorem~$\ref{main}$ and 
$Q$ in Theorem~$\ref{main2}$ are equal to each other. 
Therefore, by comparing the expressions for $Q$ in Theorems~$\ref{main}$ and $\ref{main2}$, 
we obtain $(\ref{P,til(P)})$. 
Also, we can easily verify that the degree of $P$ equals the degree of $\tilde{P}$ from $(\ref{P,til(P)})$. 
Let $d := \max \left\{m, 0 \right\} + \max \left\{k + l - m, 0 \right\} - k - 1$. Then, we have 
\begin{align*}
d = 
\begin{cases}
l - 1 & (m \geq 0,\, k + l - m \geq 0), \\
m - k - 1 & (m \geq 0,\, k + l - m < 0), \\
l - m - 1 & (m < 0,\, k + l - m \geq 0), \\
- k - 1 & (m < 0,\, k + l - m < 0). 
\end{cases}
\end{align*}
It follows from Definition~$\ref{def:til(P),C,D}$ that 
for any integers $k,\ l$ and $m$ with $k \leq l$, 
the degree of the polynomial $\tilde{P}$ is no more than $d$, 
and the coefficient of $x^{d}$ in $\tilde{P}$ 
equals $(C_{0} - D_{0}) \rvert_{k = l}$ when $k = l$, and equals $C_{0}$ when $k < l$ ; 
Namely, the coefficient of $x^{d}$ in $\tilde{P}$ equals 
\begin{align*}
\begin{cases}
\, \displaystyle\left(\frac{c}{a b} \right)^{k} q^{k (m - k - l + 1)} 
\left\{a^{m - 2k} (a)_{k} (a q / c)_{k - m} - b^{m - 2k} (b)_{k} (b q / c)_{k - m} \right\} & (k = l), \\
\, \displaystyle\left(\frac{c}{a b} \right)^{k} q^{k (m - k - l + 1)} a^{m - k - l} 
\frac{(a)_{k} (a q / c)_{k - m}}{(a q / b)_{k - l}} & (k < l). 
\end{cases}
\end{align*}
This completes the proof of Corollary~$\ref{saikouji}$. 

Next, we prove Corollary~$\ref{R=Q'}$. 
From Theorem~$\ref{main}$, we have 
\begin{align*}
&Q (k - 1, l - 1, m - 1) \rvert _{(a, b, c) \mapsto (a q, b q, c q)} \\
&\quad = - \frac{(1 - a q) (1 - b q) c}{(1 - c) (1 - c q)} 
\frac{x^{1 - \max \left\{m - 1, 0 \right\}}}{(a b q^{2} x / c)_{\max \left\{k + l - m - 1, 0 \right\} - 1}} 
P \c{k - 1}{l - 1}{m - 1}{a q}{b q}{c q}{x} \\
&\quad = \frac{(1 - a q) (1 - b q) x (c - a b q x)}{(1 - c) (1 - c q)} R (k, l, m). 
\end{align*}
Thus Corollary~$\ref{R=Q'}$ is proved. 
We can also obtain Corollary~$\ref{R=Q'}$ from Theorem~$\ref{main2}$ in the same way. 

\thanks{\bf{Acknowledgements}}
We are deeply grateful to Prof. Hiroyuki Ochiai for helpful comments. 
Also, we would like to thank Akihito Ebisu for his comments and suggestions. 
Furthermore, thanks to our colleagues in the Graduate School of Mathematics of Kyushu University.

\medskip
\begin{flushleft}
Yuka Suzuki\\
Graduate School of Mathematics\\
Kyushu University\\
Nishi-ku, Fukuoka 819-0395 \\
Japan\\
y-suzuki@math.kyushu-u.ac.jp
\end{flushleft}

\end{document}